\documentclass[11pt]{article}
\usepackage{amsmath}
\usepackage{amssymb}
\usepackage{amscd}
\usepackage{latexsym}
\usepackage{theorem}
\usepackage{setspace}
\usepackage{epsfig}
\usepackage{psfrag}
\usepackage{tikz}
\usepackage{multicol}
\usepackage{todonotes}
\usepackage{euscript}           % Nice script font.

% tikz stuff
\tikzstyle{invisivertex} = [shape=rectangle, minimum size=0pt, inner sep=2pt]
\tikzstyle{label} = [violet!60!white, shape=rectangle, minimum size=0pt, inner sep=2pt]
\tikzstyle{point}=[draw, black, fill,shape=circle, minimum size=4pt, inner sep=0pt]
\tikzstyle{puncture}=[draw, black, fill=white,shape=circle, minimum size=4pt, inner sep=0pt]
\newcommand{\drawstar}[3]{%
    \begin{scope}[shift={#1}]
        \pgfmathsetmacro{\outerradius}{#2}
        \pgfmathsetmacro{\starangle}{360/7}  % 10 points
        \pgfmathsetmacro{\innerradius}{\outerradius * sin(18) / sin(108)} % Reduced inner radius for wider spikes

        \path[#3] (90:\outerradius)
        \foreach \i in {1,...,10} {
            -- ({90+\i*\starangle - 180/\starangle}:\innerradius)
            -- ({90+\i*\starangle}:\outerradius)
        } -- cycle;
    \end{scope}
}

\newtheorem{theorem}{Theorem}[section]
\newtheorem{lemma}[theorem]{Lemma}
\newtheorem{proposition}[theorem]{Proposition}
\newtheorem{corollary}[theorem]{Corollary}

{\theorembodyfont{\rmfamily}
\theoremstyle{plain}

\newtheorem{example}[theorem]{Example}

\newtheorem{remark}[theorem]{Remark}

}

\newenvironment{proof-sal-pair}{\noindent {\bf Proof of Theorem \ref{thm:sal-pair}:}}{\qed \par}
\newenvironment{proof-z-pair}{\noindent {\bf Proof of Theorem \ref{thm:z-pair}:}}{\qed \par}

% Margin stuff from jason
\oddsidemargin=0pt
\evensidemargin=0pt
\topmargin=-.1in
\headheight=0pt
\headsep=0pt
\setlength{\textheight}{9.2in}
\setlength{\textwidth}{6.5in}

\newcommand{\C}{{\mathbb{C}}}

\newcommand{\R}{{\mathbb{R}}}

\newcommand{\cT}{\mathcal{T}}

\newcommand{\Sep}{\operatorname{Sep}}

\newcommand{\cat}{\EuScript}    % Use \EuScript to name a category.
\newcommand{\Top}{\cat{T}}

\newcommand{\cA}{\mathcal{A}}

\newcommand{\hocolim}{\operatorname{hocolim}}

\newcommand{\ra}{\rightarrow}                   % right arrow
              % long right arrow
                    % left arrow
               % long left arrow
      % labeled long right
					        % arrow
      % labeled long right
					        % arrow

\newcommand{\bdot}{\bullet}
\DeclareMathOperator{\srep}{srep}
\DeclareMathOperator{\Op}{Op}

\newcommand{\qed}{\hfill \mbox{$\Box$}\medskip\newline}
\newenvironment{proof}{\noindent {\bf Proof:}}{\qed \par}

\renewcommand{\and}{\qquad\text{and}\qquad}

\newcommand{\cK}{\mathcal{K}}
\newcommand{\cZ}{\mathcal{Z}}
\newcommand{\cM}{\mathcal{M}}

\newcommand{\Sal}{\operatorname{Sal}}
\newcommand{\cL}{\mathcal{L}}
\newcommand{\excise}[1]{}

\begin{document}
\spacing{1.2}
\noindent{\Large\bf Salvetti complexes for conditional oriented matroids}\\

\noindent{\bf Galen Dorpalen-Barry}\footnote{Supported by NSF grant DMS-2039316.}\\
Department of Mathematics, Texas A\&M, College Station, TX 77840\\
email: dorpalen-barry@tamu.edu
\vspace{.1in}

\noindent{\bf Dan Dugger}\footnote{Supported by NSF grant DMS-2039316.}\\
Department of Mathematics, University of Oregon, Eugene, OR 97403\\
email: ddugger@uoregon.edu
\vspace{.1in}

\noindent{\bf Nicholas Proudfoot}\footnote{Supported by NSF grants DMS-2039316, DMS-2053243, and DMS-2344861.}\\
Department of Mathematics, University of Oregon, Eugene, OR 97403\\
email: njp@uoregon.edu\\

{\small
\begin{quote}
\noindent {\em Abstract:}
We give a new proof of the fact that the complement of the complexification of a real hyperplane
arrangement is homotopy equivalent to the Salvetti complex of the associated oriented matroid.
Our proof involves no choices, is relatively easy to visualize, and generalizes to the setting of conditional oriented matroids.
\end{quote} }

{\small
\begin{quote}
\noindent {\em Keywords:}
oriented matroids, hyperplane arrangements, Salvetti complex, nerve lemma
\end{quote} }

\section{Introduction}
Let $\cA$ be a finite collection of affine hyperplanes in a real vector space $V$.
For each $H\in\cA$, let $H_0$ be the linear hyperplane obtained by translating $H$ to the origin, and let 
$$H\otimes\C := \{v+iw\in V\otimes \C\mid \text{$v\in H$ and $w\in H_0$}\}.$$
If $H$ is cut out of $V$ by an affine linear function $f:V\to\R$, then $H\otimes\C$ is cut out of $V\otimes\C$ by an affine linear function
with the same coefficients.
We consider three topological spaces associated with $\cA$:
\begin{itemize}
\item The complexified complement $\cM(\cA) := V\otimes\C \setminus \bigcup_{H\in\cA} H \otimes \C$.
\item The Salvetti complex $|\Sal(\cA)|$, the order complex of a poset $\Sal(\cA)$ defined in terms of the pointed oriented matroid associated with $\cA$.
\item The non-Hausdorff space $\cZ(\cA)$ obtained by gluing together finitely many copies of $V$, one for each chamber of $\cA$,
with $V_C$ and $V_{C'}$ glued along the complement of the hyperplanes that separate $C$ from $C'$.
\end{itemize}
These three spaces are related as follows.

\begin{theorem}\label{thm:sal}{\em \cite{Salvetti}} The simplicial complex $|\Sal(\cA)|$ is homotopy equivalent to $\cM(\cA)$.
\end{theorem}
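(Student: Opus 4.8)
The plan is to realize the homotopy equivalence as a zig-zag through the non-Hausdorff model $\cZ(\cA)$, exploiting the fact that $\cZ(\cA)$ carries a tautological map to the Salvetti poset. There are two continuous maps in play. First, $p\colon \cM(\cA)\ra\cZ(\cA)$ sends $v+iw$ to the class of $v$ in the chart $V_C$, where $C$ is the chamber of $\cA$ containing $v+\eps w$ for all sufficiently small $\eps>0$; this is well defined because $v+iw\in\cM(\cA)$ means that for every $H\in\cA$ either $v\notin H$ or $w\notin H_0$, so the germ at $\eps=0^+$ of the segment $\eps\mapsto v+\eps w$ lies in a single chamber of $\cA$. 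Second, $q\colon \cZ(\cA)\ra\Sal(\cA)$, with the target given the Alexandrov (order) topology, sends the class of $v$ in $V_C$ to the pair $(C',F)$, where $F$ is the face of $\cA$ containing $v$ and $C'$ is the unique chamber whose closure contains $F$ and which lies on the same side as $C$ of every hyperplane through $F$; this is well defined because $[v]_C=[v]_{\tilde C}$ forces $C$ and $\tilde C$ to be on the same side of every hyperplane through $F$, hence to determine the same $C'$. The composite $r:=q\circ p$ then sends $v+iw$ to $(C(v,w),F)$, with $F$ the face of $v$ and $C(v,w)$ the chamber of $v+\eps w$. The plan is to show that $q$ and $r$ are weak homotopy equivalences, conclude by two-out-of-three that $p$ is one as well, invoke the classical fact that $\Sal(\cA)$ with the Alexandrov topology is weakly equivalent to its order complex $|\Sal(\cA)|$, and finally upgrade to an honest homotopy equivalence by Whitehead's theorem, since $\cM(\cA)$ (an open subset of $\C^n$) and $|\Sal(\cA)|$ both have the homotopy type of CW complexes. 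Every map here is canonical, matching the claim in the abstract.

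That $q$ is a weak equivalence I would prove by the nerve lemma in McCord's form: it suffices to check that the preimage under $q$ of the minimal open set $U_{(C',F)}=\{s\in\Sal(\cA): s\ge (C',F)\}$ is weakly contractible for each $(C',F)$ (recall $(C,F)\le(D,G)$ in $\Sal(\cA)$ when $F\le G$ and $D$ is the chamber obtained from $C$ by pushing onto $G$). A short unwinding identifies $q^{-1}(U_{(C',F)})$ with the image in $\cZ(\cA)$ of the open star of $F$ sitting inside the single chart $V_{C'}$; since the open star of a face of $\cA$ is an intersection of open half-spaces it is convex, hence contractible, and $V_{C'}\ra\cZ(\cA)$ is injective on it. So $q^{-1}(U_{(C',F)})$ is contractible and McCord's theorem applies.

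That $r$ is a weak equivalence is where the geometry of the complexified complement enters. Again it suffices to show $r^{-1}(U_{(C',F)})$ is contractible. Unwinding, this set consists of those $v+iw\in\cM(\cA)$ whose real part $v$ lies in the open star of $F$ and whose imaginary part crosses, on the same side as $C'$, every hyperplane that contains the face of $v$. I would contract it to a point by choosing $v_0$ in the relative interior of $F$ and $w_0$ pointing strictly across every hyperplane through $F$ the way $C'$ does, and sliding $(v,w)$ affinely to $(v_0,w_0)$: the real part stays in the convex open star of $F$, at each time the imaginary-part condition persists because a convex combination of two reals of the same nonzero sign again has that sign, and the straightened point stays in $\cM(\cA)$ for the same reason. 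So $r^{-1}(U_{(C',F)})$ is contractible, McCord's theorem gives that $r$ is a weak equivalence, and two-out-of-three finishes the proof.

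The main obstacle I anticipate is not any single step but the careful verification of the two preimage computations, especially the one for $r$: one must track how the face of the real part, the chamber $C(v,w)$, and the sign pattern of the imaginary part interact as a point approaches a lower-dimensional stratum, and confirm that the simultaneous real-and-imaginary straightening never leaves $\cM(\cA)$ nor the preimage set. A secondary but, for this paper, essential concern is to run the entire argument with no generic linear functional and no other auxiliary choice, and to avoid using convexity of chambers or linearity of the hyperplanes beyond what survives in a conditional oriented matroid — this is what should let the same proof carry over to that generality.
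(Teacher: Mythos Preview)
Your argument is correct, but it follows a genuinely different route from the paper's. Both proofs pass through $\cZ(\cA)$, but the paper equips $\cZ(\cA,\cK)$ with the open cover $\{U_{X,T}\}$, where $U_{X,T}$ is the image of $\cK_{X,T}=\cK\cap\bigcap_{X_e=0}H_e^{T_e}$ in the chart indexed by $T$, and applies a homotopy-colimit nerve lemma (Corollary~\ref{Dan's got a lot of nerve}); the crucial step is the identification $\Sal(\cA,\cK)_z\cong\cL(\cA,\cK_z)$ for each $z$, after which contractibility follows from the purely combinatorial Proposition~\ref{face poset} about conditional oriented matroids. You instead build a continuous map $q\colon\cZ(\cA)\to\Sal(\cA)$ to the Alexandrov space and invoke McCord's theorem; your fibers $q^{-1}(U_{(C',F)})$ are open stars of faces, not the sets $U_{X,T}$, and their contractibility comes from convexity rather than from any COM statement. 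A second substantive difference is that the paper imports Theorem~\ref{thm:z} (the principal $V^*$-bundle $\cM(\cA)\to\cZ(\cA)$) as a black box to pass from $\cZ$ to $\cM$, whereas you bypass it entirely by running McCord directly on $r\colon\cM(\cA)\to\Sal(\cA)$ and then recover the weak equivalence $p\colon\cM(\cA)\to\cZ(\cA)$ by two-out-of-three. Your approach is more elementary and self-contained; the paper's buys a clean structural explanation---the poset $\Sal(\cA,\cK)_z$ \emph{is} the covector poset of a smaller conditional oriented matroid---which is exactly what makes the generalization to arbitrary pairs $(\cA,\cK)$ feel inevitable, and it isolates Proposition~\ref{face poset} as the combinatorial heart of the matter. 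Your affine straight-line contraction of $r^{-1}(U_{(C',F)})$ is fine for the realizable setting of Theorems~\ref{thm:sal} and~\ref{thm:sal-pair}, but note that it leans on the linear structure of $V$, so if you hope to push toward abstract conditional oriented matroids you would eventually need to replace that step with something like Proposition~\ref{face poset} anyway.
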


\begin{theorem}\label{thm:z}{\em \cite{non-Haus}}
The manifold $\cM(\cA)$ is the total space of a principal bundle over the space $\cZ(\cA)$ with structure group $V^*$, the linear 
dual of $V$.  In particular, $\cM(\cA)$ is weakly homotopy equivalent to $\cZ(\cA)$.
\end{theorem}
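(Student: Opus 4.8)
The plan is to exhibit the bundle projection by hand and then read off the weak equivalence from the contractibility of $V^*$. Write a point of $V\otimes\C = V\oplus iV$ as $z = v+iw$, and for $H\in\cA$ let $\ell_H\in V^*$ be the linear part of its defining functional $f_H$, so $H_0 = \ker\ell_H$. Then $z\in\cM(\cA)$ exactly when $\ell_H(w)\neq 0$ for every $H$ containing $v$; consequently $v+\varepsilon w$ lies in a single chamber $C$ of $\cA$ for all sufficiently small $\varepsilon>0$, and I would define $p\colon\cM(\cA)\to\cZ(\cA)$ by sending $v+iw$ to the point of $\cZ(\cA)$ represented by $(C,v)$ for this chamber $C$. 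Using that $(C,v)$ and $(C',v)$ represent the same point of $\cZ(\cA)$ iff no hyperplane through $v$ separates $C$ from $C'$, one checks that $p$ is well defined and surjective. Writing $V_C\subseteq\cZ(\cA)$ also for the (open) image of the $C$-th chart of the gluing, and normalizing each $f_H$ so that $f_H|_C>0$, one finds
\[
p^{-1}(V_C) = \{\,v+iw : \ell_H(w)>0\text{ whenever }v\in H\,\}
\]
(an open subset, since the set of hyperplanes through a point only shrinks under small perturbations), from which continuity of $p$ follows; and the sets $V_C$ cover $\cZ(\cA)$.

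The heart of the argument is a choice-free trivialization of $p$ over each $V_C$. Keeping the normalization $f_H|_C>0$, set $\lambda(a,t) := \log\!\bigl(t+\sqrt{t^2+a^2}\bigr)$, which is smooth on $\R^2$ away from the ray $\{0\}\times(-\infty,0]$, has $\partial_t\lambda(a,t) = (t^2+a^2)^{-1/2}>0$, and satisfies $\lambda(a,t)+\lambda(-a,-t) = 2\log|a|$. On $p^{-1}(V_C)$ each pair $\bigl(f_H(v),\ell_H(w)\bigr)$ avoids that ray, so
\[
L_C(v+iw) := \sum_{H\in\cA}\lambda\bigl(f_H(v),\ell_H(w)\bigr)\,\ell_H\ \in\ V^*
\]
is a well-defined smooth function. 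For fixed $v$, the $w$-derivative of $L_C(v+i\,\cdot\,)$ is $\sum_H\bigl(\ell_H(w)^2+f_H(v)^2\bigr)^{-1/2}\,\ell_H\otimes\ell_H$, which is positive definite once $\cA$ is essential (the general case reduces to this by splitting off the lineality subspace). Thus $L_C(v+i\,\cdot\,)$ is a local diffeomorphism on the open cone $\{w : \ell_H(w)>0\text{ for }H\ni v\}$, and moreover it is proper — its value runs off to infinity as $w$ approaches a facet of that cone or tends to infinity inside it — so, being a proper local diffeomorphism onto the connected, simply connected $V^*$, it is a diffeomorphism of the cone onto $V^*$. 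Consequently $(v,\xi)\mapsto\bigl(v,\,(L_C(v+i\,\cdot\,))^{-1}(\xi)\bigr)$ is a homeomorphism $\Theta_C\colon V\times V^*\xrightarrow{\,\sim\,}p^{-1}(V_C)$ over $V_C\cong V$.

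To see that these trivializations assemble into a principal $V^*$-bundle, compare $L_C$ and $L_{C'}$ on $p^{-1}(V_C\cap V_{C'})$. If $v\notin\Sep(C,C')$ then no hyperplane through $v$ separates $C$ from $C'$, so the $C$- and $C'$-normalizations of $f_H$ agree for every $H\ni v$; combining this with the identity $\lambda(a,t)+\lambda(-a,-t)=2\log|a|$ yields $L_{C'} = L_C - g_{CC'}$, where $g_{CC'}(v) = 2\sum_{H\in\Sep(C,C')}\log|f_H(v)|\,\ell_H$ is a continuous $V^*$-valued function on $V\setminus\Sep(C,C')\cong V_C\cap V_{C'}$, and the $g_{CC'}$ satisfy the cocycle identity. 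Hence the transition maps $\Theta_{C'}^{-1}\Theta_C$ are the translations $(v,\xi)\mapsto(v,\xi-g_{CC'}(v))$, so $p\colon\cM(\cA)\to\cZ(\cA)$ is a principal $V^*$-bundle; equivalently, $\xi\cdot z := \Theta_C\bigl(\mathrm{pr}_V\Theta_C^{-1}z,\ \mathrm{pr}_{V^*}\Theta_C^{-1}z + \xi\bigr)$ is a well-defined free $V^*$-action whose orbits are the fibers of $p$. Finally, since $V^*\cong\R^{\dim V}$ is contractible and every locally trivial bundle is a Serre fibration, the long exact homotopy sequence of $V^*\to\cM(\cA)\xrightarrow{p}\cZ(\cA)$ gives $\pi_k(\cM(\cA))\cong\pi_k(\cZ(\cA))$ for all $k$; both spaces are path-connected — the complexification has real codimension $2$, and $\cZ(\cA)=p(\cM(\cA))$ — so $p$ is a weak homotopy equivalence.

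The step I expect to be the main obstacle is the analytic one: proving that the logarithmic moment-type map $L_C(v+i\,\cdot\,)$ is a diffeomorphism from each chamber of the localized linear arrangement onto all of $V^*$, uniformly enough in $v$ that $\Theta_C$ is continuous across the loci where that localization jumps. The remaining ingredients — identifying $\cZ(\cA)$ with the space of pairs $(x,D)$ with $x\in V$ and $D$ a chamber of the arrangement localized at $x$, the openness of the cover $\{V_C\}$, and the continuity of $p$ and of the $\Theta_C$ — are routine, but must be verified carefully because $\cZ(\cA)$ is non-Hausdorff.
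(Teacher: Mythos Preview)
The paper does not give its own proof of this theorem: it is quoted from \cite{non-Haus} and used as input, and the pair version (Theorem~\ref{stronger}) is then obtained in one line by restricting the cited bundle to an open subset. So there is no in-paper argument to compare your construction against; what follows is an assessment of the proposal on its own terms.

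On its own merits your argument is essentially correct. The definition of $p$ via the chamber of $v+\varepsilon w$, the identification $p^{-1}(V_C)=\{v+iw:\ell_H(w)>0\text{ whenever }v\in H\}$, and the transition computation via $\lambda(a,t)+\lambda(-a,-t)=2\log|a|$ are all right; tracking the sign of $\ell_H$ under the change of normalization shows that the $g_{CC'}$ satisfy the cocycle identity (contributions from $H\in\Sep(C,C')\cap\Sep(C',C'')$ cancel, those from the symmetric difference survive and give $g_{CC''}$). The two points you flag can both be closed. For properness of $L_C(v+i\,\cdot\,)\colon\Omega_v\to V^*$ (where $\Omega_v=\{w:\ell_H(w)>0\text{ for all }H\ni v\}$), note that this map is the gradient of a strictly convex function $\Phi_v(w)=\sum_H\Lambda(f_H(v),\ell_H(w))$ with $\partial_t\Lambda=\lambda$; to see divergence at the finite boundary, pair with any fixed $\eta\in\Omega_v$ and observe that only the summands with $f_H(v)=0$ and $\ell_H(w)\to 0^+$ blow up, all with the same sign since $\ell_H(\eta)>0$; to see divergence at infinity, pair with $w/|w|$ and use $\lambda(a,t)=\operatorname{sgn}(t)\log|t|+O(1)$, giving $\langle L_C,w/|w|\rangle\sim\bigl(\sum_H|\ell_H(w/|w|)|\bigr)\log|w|\to+\infty$. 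For the joint continuity of $\Theta_C$, argue globally rather than fibrewise: $(v,w)\mapsto(v,L_C(v+iw))$ is a smooth bijection from the open set $p^{-1}(V_C)\subset V\otimes\C$ onto $V\times V^*$ with block-triangular Jacobian whose diagonal blocks are $I$ and the positive-definite $D_wL_C$, hence it is a diffeomorphism by the inverse function theorem, and $\Theta_C$ is its smooth inverse.

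One small caveat on the reduction to the essential case: as written, your $L_C$ takes values only in $\operatorname{span}\{\ell_H\}\cong(V/W)^*$, where $W=\bigcap_{H}H_0$, so what you actually exhibit is a principal $(V/W)^*$-bundle together with a trivial $iW$-factor. Promoting the structure group to all of $V^*$ canonically requires an identification $iW\cong W^*$, which ``splitting off the lineality subspace'' does not by itself provide.
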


% \galentodo{I think the argument for Theorem \ref{thm:z} assumes that the arrangement is essential, but that it's not hard to extend it to the general case. Do we want to add a remark to that effect here?}
% \nicktodo{Not really.  If it's inessential, we literally just cross with a vector space.}

The original proof of Theorem \ref{thm:sal} proceeds by constructing an inclusion of $|\Sal(\cA)|$ into $\cM(\cA)$ and showing that the image
is a deformation retract of the target.  While extremely concrete, this proof relies on a number of arbitrary choices.
In the case where all of the hyperplanes pass through the origin, 
Paris \cite{Paris} provided a new proof of Theorem \ref{thm:sal} by constructing a good open cover of $\cM(\cA)$ and identifying
$|\Sal(\cA)|$ with the nerve of this cover.

In this note, we will provide yet another proof of Theorem \ref{thm:sal}, also based on a variant of the nerve lemma.  Rather than working with 
Paris's cover of $\cM(\cA)$, we will work with the defining cover of $\cZ(\cA)$.  This is not a good cover in the technical sense, because the intersections
of the open sets are not connected.  However, it is completely canonical, and the topology of the intersections
interacts well with the combinatorics of oriented matroids.
Our analysis of this cover will allow us to conclude that $|\Sal(\cA)|$ is weakly homotopy equivalent
to $\cZ(\cA)$, and therefore also homotopy equivalent to $\cM(\cA)$ by Theorem \ref{thm:z}.  A sketch of a related approach appears in
\cite[Remark 3.4.3]{bug}.

One nice feature of this perspective is that it generalizes easily to a broader setting, as we now describe.
Fix a nonempty convex open subset $\cK \subset V$, and let $$\cM(\cA,\cK) := \{v+iw\in M(\cA)\mid v\in\cK\}.$$
Note that $\cM(\cA,V) = \cM(\cA)$.
The definition of the Salvetti poset generalizes to give us a new poset $\Sal(\cA,\cK)$, now defined in terms of the associated
{\bf conditional oriented matroid} \cite{BCK}.
We then define $\cZ(\cA,\cK)$ in a manner analogous to the definition of $\cZ(\cA)$,
by gluing together copies of $\cK$ rather than of $V$.  With little additional work, we prove 
the following generalizations of Theorems \ref{thm:sal} and \ref{thm:z}.

\begin{theorem}\label{thm:sal-pair} The simplicial complex $|\Sal(\cA,\cK)|$ is homotopy equivalent to the manifold $\cM(\cA,\cK)$.
\end{theorem}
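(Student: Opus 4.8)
The plan is to deduce Theorem~\ref{thm:sal-pair} from Theorem~\ref{thm:z-pair} by analyzing the tautological open cover of $\cZ(\cA,\cK)$. By Theorem~\ref{thm:z-pair} the space $\cM(\cA,\cK)$ is weakly homotopy equivalent to $\cZ(\cA,\cK)$, so it is enough to exhibit a weak homotopy equivalence $|\Sal(\cA,\cK)| \he \cZ(\cA,\cK)$: since $\cM(\cA,\cK)$ is an open subset of the complex vector space $V\otimes\C$ and $|\Sal(\cA,\cK)|$ is a simplicial complex, both have the homotopy type of CW complexes, so Whitehead's theorem will then promote any weak equivalence to a genuine homotopy equivalence.

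For each chamber $C$ of $\cA$, let $U_C \subseteq \cZ(\cA,\cK)$ be the image of the corresponding glued-in copy of $\cK$. These sets cover $\cZ(\cA,\cK)$, and the forgetful map $\cZ(\cA,\cK)\to\cK$ that remembers only the underlying point of $\cK$ restricts to a homeomorphism on each $U_C$; in particular each $U_C$ is contractible. The first point to check is that for a finite nonempty set $\s$ of chambers the intersection $U_\s := \bigcap_{C\in\s} U_C$ is carried homeomorphically onto the complement inside $\cK$ of the union of all hyperplanes that separate some two chambers of $\s$. Hence $U_\s$ is a disjoint union of convex open sets, and every connected component of $U_\s$ is contractible. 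This is the sense in which the cover ``interacts well with oriented matroids'': a component of $U_\s$ records a covector of the conditional oriented matroid supported off the separating hyperplanes, while $\s$ contributes a compatible sign pattern on those hyperplanes.

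Next I would apply the version of the nerve lemma valid when all nonempty finite intersections of a cover are disjoint unions of contractible spaces, or equivalently compute the homotopy colimit of the diagram of intersections. Because a connected component of $U_\s$ is contained in a unique component of $U_{\s'}$ whenever $\s'\subseteq\s$, this yields a weak homotopy equivalence $\cZ(\cA,\cK)\he |P|$, where $P$ is the poset of pairs $(\s,W)$ with $W$ a connected component of $U_\s$, ordered by $(\s,W)\le(\s',W')$ when $\s\supseteq\s'$ and $W\subseteq W'$. (It is convenient, though not logically necessary, first to retract $P$ onto the subposet of ``saturated'' pairs, those with $\s=\{C:W\subseteq U_C\}$; the obvious retraction is order-preserving and comparable to the identity, hence a homotopy equivalence on realizations.)

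The remaining step, which I expect to be by far the hardest, is to identify $|P|$ with $|\Sal(\cA,\cK)|$. To a pair $(\s,W)$ one attaches a covector $F$ and a tope $T$ of the conditional oriented matroid --- with $F$ vanishing exactly on the hyperplanes that separate chambers of $\s$ and carrying the common sign of $\s$ on the remaining hyperplanes, and $T$ read off from the signs of the region $W$ --- thereby defining a map of posets between $P$ (or its saturated version) and $\Sal(\cA,\cK)$. One then checks, using Quillen's fiber lemma, that this map induces a homotopy equivalence of order complexes, the relevant fibers being contractible because they possess a greatest element. Pinning down this combinatorial dictionary exactly --- matching the order relations on both sides and verifying the fiber condition against the definition of the Salvetti poset of a conditional oriented matroid --- is where essentially all the content lies; everything else is either formal or an immediate consequence of convexity. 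A secondary issue needing care is that $\cZ(\cA,\cK)$ is non-Hausdorff and the cover is not good in the classical sense, so one must work with a form of the nerve lemma robust enough to allow that, which the homotopy-colimit formulation supplies.
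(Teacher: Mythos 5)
Your global strategy---reduce to $\cZ(\cA,\cK)$ via Theorem~\ref{thm:z-pair}, cover it by the images $U_C$ of the glued-in copies of $\cK$, and run a homotopy-colimit form of the nerve lemma---is the same in spirit as the paper's, and your description of the intersections $U_\s$ is correct. The problem is in the step you yourself flag as carrying all the content. The poset $P$ of pairs $(\s,W)$, even after saturation, is \emph{not} the Salvetti poset, and the dictionary you propose does not even define a map into it: the sign vector that vanishes exactly on $\bigcup_{C,C'\in\s}\Sep(C,C')$ and carries the common sign of $\s$ elsewhere need not be a covector of $\cL(\cA,\cK)$. Already for two points on a line (Example~\ref{two points 1}), the middle component $W=\{[(v,B)]:v\in F_B\}$ of $U_{\{A,B,C\}}$ is a saturated pair with $\s=\{A,B,C\}$; both hyperplanes separate chambers of $\s$, so your recipe outputs $(00)$, which is not a covector since the two points are distinct. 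In this example the saturated poset has eight elements while $\Sal(\cA,V)$ has seven, so no isomorphism is possible, and the Quillen-fiber comparison you defer to would have to be built on some other, as yet unspecified, map. A related symptom: your argument never invokes the contractibility of the order complex of a conditional oriented matroid (Proposition~\ref{face poset}) or any substitute, yet some such combinatorial input must appear somewhere.

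The paper avoids this difficulty by refining the cover so that it is indexed by $\Sal(\cA,\cK)$ from the outset: to $(X,T)$ it assigns the open set $U_{X,T}$, the image in $\cZ(\cA,\cK)$ of $\cK_{X,T}\times\{T\}$, where $\cK_{X,T}$ is the component of $\cK\setminus\bigcup_{X_e=0}H_e$ containing $F_T\cap\cK$. Each $U_{X,T}$ is convex, hence contractible, and the nerve lemma used (Corollary~\ref{Dan's got a lot of nerve}) does not require good intersections---only that, for each $z\in\cZ(\cA,\cK)$, the subposet of indices whose open set contains $z$ have contractible order complex. That subposet is identified (Lemma~\ref{confusing}) with the covector poset $\cL(\cA,\cK_z)$ of an auxiliary conditional oriented matroid, whose order complex is contractible by Proposition~\ref{face poset}; this is exactly the input your sketch is missing. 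If you want to rescue your route, the natural move is to compare $P$ with $\Sal(\cA,\cK)$ via the map in the other direction, $(X,T)\mapsto\bigl(\{S\in\cT: X\le S\},\,U_{X,T}\bigr)$, and check Quillen's fiber condition---but at that point you will have essentially reconstructed the paper's refined cover, so it is cleaner to start with it.
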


\begin{theorem}\label{thm:z-pair}
The manifold $\cM(\cA,\cK)$ is canonically isomorphic to a principal bundle over the space $\cZ(\cA,\cK)$ with structure group $V^*$,
and is therefore weakly homotopy equivalent to $\cZ(\cA,\cK)$.
\end{theorem}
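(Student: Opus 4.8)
The plan is to follow the template of the proof of Theorem~\ref{thm:z} given in \cite{non-Haus} for the special case $\cK=V$, verifying that each step is local in the real part and so survives the restriction to $v\in\cK$. I would begin by constructing the projection map $\pi\colon\cM(\cA,\cK)\to\cZ(\cA,\cK)$. Each defining affine function $f_H$ extends to an affine function on $V\otimes\C$ with the same coefficients, and for $z=v+iw$ one has $f_H(z)=f_H(v)+i\,\ell_H(w)$, so that $z\in\cM(\cA)$ exactly when $f_H(z)\neq 0$ for every $H$. Recording $\operatorname{sgn}f_H(v)$ for the $H$ not through $v$ and $\operatorname{sgn}\ell_H(w)$ for the $H$ through $v$ picks out the unique chamber $C(z)$ of $\cA$ meeting $\cK$ that contains $v+\eps w$ for all sufficiently small $\eps>0$ (here one uses that $\cK$ is open and convex, so $v+\eps w\in\cK$); I then set $\pi(z):=[v]\in\cK_{C(z)}\subseteq\cZ(\cA,\cK)$. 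A short check shows $\pi$ is well defined and continuous.

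Next I would pin down the fibers and introduce the $V^*$-action. The fiber of $\pi$ over a point $[v]\in\cK_C$ is the set of imaginary parts $w\in V$ whose signs against the functionals $\ell_H$, taken over the hyperplanes $H$ through $v$, are those prescribed by $C$; this is a nonempty, relatively open, convex cone, hence contractible, and these cones shrink in a controlled (face-poset-monotone) way as $v$ moves onto more hyperplanes. The group $V^*$ acts on $\cM(\cA,\cK)$ by the canonical recipe of \cite{non-Haus}: concretely, for each $\xi\in V^*$ a complete flow whose generator at $z$ points in an imaginary direction, never vanishes on $\cM(\cA)$, and keeps each $\ell_H(w)$ nonzero (equivalently, of constant sign) whenever $f_H(v)=0$. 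The substantive points to verify are that this action preserves every complexified hyperplane $H\otimes\C$ and the region $\{z:\Re z\in\cK\}$, is free, and has orbits equal to the fibers of $\pi$; then $\pi$ becomes $V^*$-equivariant with $V^*$ acting trivially on the base.

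The remaining step, and the one I expect to be the main obstacle, is local triviality of $\pi$ as a principal $V^*$-bundle. I would use the defining cover of $\cZ(\cA,\cK)$ by the open sets $\cK_C$, the images of the copies of $\cK$, and construct over each $\cK_C$ a $V^*$-equivariant homeomorphism $\pi^{-1}(\cK_C)\to\cK_C\times V^*$. The point is that $\pi^{-1}(\cK_C)$ is the open subset of $\{v+iw:v\in\cK\}$ cut out by the sign conditions above, fibered over $\cK_C$ by the nested cones; straightening these cones fiberwise is exactly where the covector and localization axioms of the conditional oriented matroid of \cite{BCK} get used, to guarantee that the straightening varies continuously as $v$ ranges over $\cK$. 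Because $\cK$ is convex, the conditional oriented matroid restricted to $\cK_C$ looks locally like an ordinary oriented matroid in a chamber, so the only genuinely new content beyond \cite{non-Haus} is bookkeeping: the passage from $\cM(\cA)$ to $\cM(\cA,\cK)$ does not disturb any of the constructions, since all of them take place near a point $v\in\cK$.

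Finally, since $V^*$ is a real vector space and hence contractible, any principal $V^*$-bundle is a weak homotopy equivalence onto its base: the long exact homotopy sequence of the fibration $V^*\to\cM(\cA,\cK)\xrightarrow{\pi}\cZ(\cA,\cK)$ (a locally trivial map is a Serre fibration, even over this non-Hausdorff base) forces $\pi$ to induce isomorphisms on all homotopy groups. Together with the principal-bundle statement this proves Theorem~\ref{thm:z-pair}.
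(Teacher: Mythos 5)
Your proposal is correct in outline, but it takes a much heavier route than the paper. You propose to re-run the entire construction of \cite{non-Haus} --- the sign-vector description of $\pi$, the identification of the fibers as open convex cones, the $V^*$-action, and the local triviality argument --- while checking at each stage that restricting the real part to $\cK$ does no harm. The paper instead packages your guiding observation (``everything is local in the real part'') into a single compatibility statement: the bundle map $\varphi:\cM(\cA,V)\to\cZ(\cA,V)$ of \cite{non-Haus} satisfies $\pi\circ\varphi=\rho$, where $\pi$ and $\rho$ are the projections onto the real part. Granting this, $\cZ(\cA,\cK)$ is the open subset $\pi^{-1}(\cK)\subset\cZ(\cA,V)$, its preimage $\varphi^{-1}(\cZ(\cA,\cK))$ is exactly $\cM(\cA,\cK)$, and the restriction of a principal bundle to an open subset of the base is again a principal bundle --- so nothing from \cite{non-Haus} needs to be reconstructed or re-verified. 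The one step you would still need to make explicit on the paper's route, and which you essentially verify along the way, is that $\pi^{-1}(\cK)$ with its quotient topology really is $\cZ(\cA,\cK)$ as defined by gluing copies of $\cK$ (this uses that $\cK$ is open, so any face meeting $\cK$ at a point $v$ has all its adjacent chambers meeting $\cK$ as well). Two small cautions on your version: the fibers of $\varphi$ are open convex cones in $V$, and identifying them coherently with $V^*$-torsors is the genuinely nontrivial content of \cite{non-Haus}, not something the cOM axioms of \cite{BCK} hand you for free; and your final step (contractibility of $V^*$ plus the long exact sequence of the Serre fibration) is fine and is exactly what the paper means by ``therefore weakly homotopy equivalent.''
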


\begin{remark}
Even if we are only interested in the case where $\cK = V$, the proof that we will give of Theorem \ref{thm:sal-pair} will force us to think about
the pair $(\cA,\cK_z)$, where $\cK_z$ is a certain cone in $V$ that depends on the choice of a point $z\in\cZ(\cA,V)$.  Hence the generalization
of Salvetti's theorem to pairs $(\cA,\cK)$ and their associated conditional oriented matroids feels natural.
\end{remark}

% \galentodo{In telling the ``story of COMs'' holistically, it might be helpful for us to have a definition of the Orlik--Solomon algebra of a COM.
% It's a litle boring and maybe outside the scope of this paper but it would fit well into the meta-story of COMs.
% Would it be too much hassle to add that here?}
% \nicktodo{My feeling is that it would be too much.}

\section{Conditional oriented matroids}
Let $E$ be a finite set.  An element of $\{-,0,+\}^E$ is called a {\bf sign vector}.  Given a pair of sign vectors $X$ and $Y$,
the {\bf composition} $X\circ Y$ is a new sign vector with $e^{\text{th}}$ entry
$$(X\circ Y)_e := \begin{cases}
X_e & \text{if $X_e\neq 0$}\\
Y_e & \text{otherwise}.
\end{cases}$$
Given two sign vectors $X$ and $Y$, their {\bf separating set} is
$$\Sep(X,Y) := \{e\in E\mid \{X_e,Y_e\}=\{-,+\}\}.$$
A nonempty collection $\cL$ of sign vectors is called a {\bf conditional oriented matroid} \cite{BCK} if it satisfies the following two conditions:
\begin{itemize}
\item If $X,Y\in\cL$, then $X\circ (-Y)\in\cL$.
\item If $X,Y\in\cL$ and $e\in\Sep(X,Y)$, then there exists $Z\in \cL$ such that $Z_e=0$ and $Z_f = (X\circ Y)_f$ for all $f\in E\setminus\Sep(X,Y)$.
\end{itemize}
Elements of $\cL$ are called {\bf covectors}.
If in addition $\cL$ contains the zero vector, then $\cL$ is called an {\bf oriented matroid}.

\begin{example}\label{realizable}
Let $V$ be a real vector space and $\cA = \{H_e\mid e\in E\}$ a finite multiset of cooriented affine hyperplanes in $V$, indexed by $E$.
More precisely, for each $e\in E$, we have an affine hyperplane $H_e$, a positive side $H_e^+$, and a negative side $H_e^-$, with 
$V = H_e^+\sqcup H_e\sqcup H_e^-$.  For each sign vector $X$, we define
$$F_X := \bigcap_{X_e = -} H_e^- \cap \bigcap_{X_e = 0} H_e \cap \bigcap_{X_e = +} H_e^+$$
to be the corresponding face of $\cA$.  Given a nonempty convex open subset $\cK\subset V$, we have a conditional oriented matroid
$$\cL(\cA,\cK) := \{X\mid F_X \cap \cK \neq \emptyset\}.$$
This is an oriented matroid if and only if there exists a point in $\cK$ that is contained in all of the hyperplanes,
in which case $\cL(\cA,\cK) = \cL(\cA, V)$.
\end{example}

There is a natural partial order on sign vectors defined by putting $X\leq Y$ if and only if $X\circ Y = Y$, and this restricts to a partial order
on the covectors of any conditional oriented matroid.  In the special case of Example \ref{realizable}, we have $X\leq Y\in\cL(\cA,\cK)$ 
if and only if $F_X$ is contained in the closure of $F_Y$.

Given a poset $P$, the {\bf order complex} $|P|$ is the simplicial complex whose $r$-simplicies consist of chains $x_0<x_1<\cdots<x_r$ in $P$.
The following result is proved in \cite[Proposition 15]{BCK}.

\begin{proposition}\label{face poset}
For any conditional oriented matroid $\cL$, the order complex $|\cL|$ is contractible.
\end{proposition}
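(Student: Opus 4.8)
The plan is to induct on the size of the ground set $E$, reducing at each step to the deletion $\cL\setminus e:=\{X|_{E\setminus e}\mid X\in\cL\}$ by a single application of Quillen's Theorem A. When $E=\emptyset$ the set $\cL$ is a single point, so there is nothing to prove. One preliminary fact I would record first, since it is used repeatedly to produce covectors inside $\cL$: every conditional oriented matroid is closed under composition, i.e. $X\circ Y\in\cL$ whenever $X,Y\in\cL$. This follows from two applications of the face-symmetry axiom via the identity $X\circ Y=X\circ\big(-(X\circ(-Y))\big)$, which is a coordinatewise check.

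For the inductive step, fix $e\in E$. First I would verify that $\cL\setminus e$ is a conditional oriented matroid on $E\setminus e$: face-symmetry is immediate because restriction commutes with $\circ$ and $-$, and the elimination axiom follows from that for $\cL$ once one notes that restricting to $E\setminus e$ changes $\Sep$ by at most the coordinate $e$. By the inductive hypothesis $|\cL\setminus e|$ is contractible, so it is enough to show that the order-preserving surjection $\pi\colon\cL\to\cL\setminus e$, $X\mapsto X|_{E\setminus e}$, induces a homotopy equivalence of order complexes. By Quillen's Theorem A this reduces to showing that for every $\bar Y\in\cL\setminus e$ the fiber
$$G:=\pi^{-1}\big((\cL\setminus e)_{\le\bar Y}\big)=\{X\in\cL\mid X|_{E\setminus e}\le\bar Y\}$$
has contractible order complex.

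To analyze $G$, let $(\bar Y,s)$ denote the sign vector agreeing with $\bar Y$ away from $e$ and equal to $s\in\{-,0,+\}$ at $e$, and put $H:=\{(\bar Y,s):(\bar Y,s)\in\cL\}$. The first step is to identify $H$: it is nonempty because any $\cL$-lift of $\bar Y$ lies in it; the elimination axiom applied at the coordinate $e$ shows $H$ cannot be exactly $\{(\bar Y,+),(\bar Y,-)\}$; and face-symmetry, in the form $(\bar Y,0)\circ(-(\bar Y,s))=(\bar Y,-s)$, shows $H$ cannot contain $(\bar Y,0)$ together with exactly one of $(\bar Y,\pm)$. Hence $H$ is either a single covector or the three-element poset with bottom $(\bar Y,0)$ and atoms $(\bar Y,\pm)$; in either case it has one or two maximal elements $h$, each lying in $G$. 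The second step uses composition with a fixed lift $X_0$ of $\bar Y$ to show that every $X\in G$ lies weakly below a maximal element of $H$: one always has $X\le(\bar Y,X_e)$, and $X\circ X_0$ equals $(\bar Y,X_e)\in\cL$ whenever $X_e\neq0$, while if $X_e=0$ then $X\le(\bar Y,0)$ and hence $X$ lies below any atom of $H$ one chooses. Therefore $G$ is covered by the subposets $G_{\le h}$, one or two of them, each with a maximum and hence contractible; when there are two, their intersection is $\{X\in G:X_e=0\}=G_{\le(\bar Y,0)}$, again with a maximum; and since every chain in $G$ has $e$-coordinates lying entirely in $\{0,+\}$ or entirely in $\{0,-\}$, these cones cover $|G|$ as subcomplexes meeting in $|G_{\le(\bar Y,0)}|$. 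Thus $|G|$ is a pushout of contractible complexes along inclusions, hence contractible, which completes the induction.

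The step I expect to be the crux is the analysis of the fiber $G$: it need not have a maximum — exactly when $\cL$ contains covectors over $\bar Y$ that vanish at $e$ — so it cannot simply be coned off, and one must use the elimination and face-symmetry axioms both to rule out the disconnected possibility $H=\{(\bar Y,+),(\bar Y,-)\}$ and to package $G$ as a union of two overlapping cones. The remaining ingredients — the composition-closure lemma, the stability of the axioms under deletion, and the bookkeeping for Quillen's Theorem A — are routine.
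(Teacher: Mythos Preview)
Your argument is correct. The composition--closure identity, the verification that deletion is again a conditional oriented matroid, the fiber analysis via the set $H$ of lifts of $\bar Y$, and the two-cone covering of $|G|$ all go through as you describe. One small point worth making explicit: in the case where $H$ is the singleton $\{(\bar Y,0)\}$, you should note that no $X\in G$ can have $X_e\ne0$ (otherwise $X\circ X_0=(\bar Y,X_e)$ would contradict $H=\{(\bar Y,0)\}$), so $G$ again has a maximum; you handle the other singleton cases but glide over this one.

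As for comparison with the paper: the paper does not actually prove this proposition. It cites \cite[Proposition~15]{BCK} and remarks that the proof there proceeds by first passing to the canonical \emph{semisimplification} (where the coordinate functions are nonconstant and distinct), which has an isomorphic poset of covectors, and then arguing in that restricted setting. Your route is genuinely different: rather than normalizing the conditional oriented matroid, you induct on $|E|$ via the deletion map and Quillen's fiber lemma, with the combinatorial work concentrated in the structure of the preimage of a principal down-set. Your approach is self-contained and uses only the two axioms directly; the trade-off is that the fiber analysis (ruling out $H=\{(\bar Y,+),(\bar Y,-)\}$ and building the two-cone cover) is where the content lives, whereas the semisimplification approach front-loads the reduction and presumably allows a more geometric endgame.
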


\begin{remark}
The proof in \cite{BCK} of Proposition \ref{face poset} proceeds by reducing to the case where $\cL$ is semisimple, which means
that the coordinate functions $\chi_e:\cL\to\{-,0,+\}$ are nonconstant and distinct.  This is sufficient because every conditional oriented matroid $\cL$
has a canonical semisimplification, and the posets of covectors of the semisimplification is isomorphic to that of $\cL$.
\end{remark}

\begin{example}\label{two points 1}
Suppose that $\cA$ consists of two distinct points on a line $V$, each oriented so that the positive side is on the right.
We have five covectors:
$$A = (--),\quad H = (0-),\quad B = (+-),\quad H' = (+0), \quad C = (++).$$
In the picture below, we label each face $F_X$ with the corresponding covector $X$.

\begin{center}
\begin{tikzpicture}[scale=.6]

\node[invisivertex] (R-) at (-6,0){};
\node[invisivertex] (R+) at (6,0){};

\node[invisivertex ] (H) at (-2,.5){$H$};
\node[invisivertex ] (H') at (2,.5){$H'$};

\node[invisivertex] (H) at (-4,.5){$A$};
\node[invisivertex] (H') at (0,.5){$B$};
\node[invisivertex] (H') at (4,.5){$C$};

\node[point] (H) at (-2,0){};
\node[point] (H') at (2,0){};

\path[-] (R-) edge [] node[above] {} (R+);
\end{tikzpicture}

\excise{
\begin{tikzpicture}[scale=.6]

\node[invisivertex] (R-) at (-6,0){};
\node[invisivertex] (R+) at (6,0){};

\node[invisivertex ] (H) at (-2,.5){$0+$};
\node[invisivertex ] (H') at (2,.5){$-0$};

\node[invisivertex] (H) at (-4,.5){$++$};
\node[invisivertex] (H') at (0,.5){$-+$};
\node[invisivertex] (H') at (4,.5){$--$};

\node[point] (H) at (-2,0){};
\node[point] (H') at (2,0){};

\path[-] (R-) edge [] node[above] {} (R+);
\end{tikzpicture}
}
\end{center}
The Hasse diagram of the poset $\cL(\cA,V)$ looks like Cassiopaea:
% \galentodo{What about saying ``The set of covectors orderded by inclusion looks like Cassiopaea:''? (We can also remove the stars, I was just having fun on the plane!)}
% \nicktodo{Is this version okay?  I agree that we should remove the stars.}
% \galentodo{Sounds great! I've removed the stars}

\begin{center}
\begin{tikzpicture}[]
% \drawstar{(-1,0)}{.5}{draw=yellow, fill=yellow};
% \drawstar{(1,0)}{.5}{draw=yellow, fill=yellow};
% \drawstar{(-2,1)}{.5}{draw=yellow, fill=yellow};
% \drawstar{(0,1)}{.5}{draw=yellow, fill=yellow};
% \drawstar{(2,1)}{.5}{draw=yellow, fill=yellow};

\node[invisivertex ] (H) at (-1,0) {$H$};
\node[invisivertex ] (H') at (1,0){$H'$};

\node[invisivertex] (A) at (-2,1){$A$};
\node[invisivertex] (B) at (0,1){$B$};
\node[invisivertex] (C) at (2,1){$C$};

\path[-] (H) edge [] node[above] {} (A);
\path[-] (H) edge [] node[above] {} (B);
\path[-] (H') edge [] node[above] {} (B);
\path[-] (H') edge [] node[above] {} (C);
\end{tikzpicture}
\excise{\qquad
\begin{tikzpicture}[]
\drawstar{(-1,0)}{.5}{draw=yellow, fill=yellow};
\drawstar{(1,0)}{.5}{draw=yellow, fill=yellow};
\drawstar{(-2,1)}{.5}{draw=yellow, fill=yellow};
\drawstar{(0,1)}{.5}{draw=yellow, fill=yellow};
\drawstar{(2,1)}{.5}{draw=yellow, fill=yellow};
\node[invisivertex ] (H) at (-1,0){$0+$};
\node[invisivertex ] (H') at (1,0){$-0$};

\node[invisivertex] (A) at (-2,1){$++$};
\node[invisivertex] (B) at (0,1){$-+$};
\node[invisivertex] (C) at (2,1){$--$};

\path[-] (H) edge [] node[above] {} (A);
\path[-] (H) edge [] node[above] {} (B);
\path[-] (H') edge [] node[above] {} (B);
\path[-] (H') edge [] node[above] {} (C);
\end{tikzpicture}}
\end{center}
Since the longest chains consist of two elements, the order complex looks exactly like the Hasse diagram, and one can see that it is contractible.
\end{example}

Let $\cL$ be a conditional oriented matroid.  A covector that is maximal with respect to our partial order is called a {\bf tope}.
In the case where $\cL = \cL(\cA,\cK)$, the topes are in bijection with the connected components of $\cK \setminus\bigcup_{e\in E}H_e$ via the correspondence
$T\mapsto F_T\cap\cK$.
The {\bf Salvetti poset} $\Sal(\cL)$ is the set of pairs $(X,T)$ where $X\leq T\in \cL$ and $T$ is a tope.
The partial order is given as follows:
$$(X,T)\preceq(X',T') \quad\Leftrightarrow\quad X\leq X'\;\;\text{and}\;\; X'\circ T = T'.$$
We write $\Sal(\cA,\cK) := \Sal(\cL(\cA,\cK))$ for brevity.

\begin{remark}
The definition that we have given is opposite to the usual definition of the Salvetti poset of an oriented matroid.
Our definition is better adapted to Lemma \ref{containment} and to the proof of Theorem \ref{thm:z-pair}.
Note that this has no effect on the definition of the Salvetti complex, as the order complex of a poset is canonically isomorphic to that of its opposite.
\end{remark}

\begin{example}\label{two points 2}
Let $\cA$ be the arrangement from Example \ref{two points 1}.  
The poset $\Sal(\cA,V)$ consists of seven elements, and again the maximal chains have length 2, so the order complex looks like the Hasse diagram.
On the left we draw the diagram with the maximal elements on the top row and colored green, and on the right we redraw it without crossings but with the maximal
elements in the middle.  One can see that it is homotopy equivalent to $\cM(\cA,V)$.
\begin{center}
\begin{tikzpicture}[]
\node[invisivertex ] (HA) at (0,0){$(H,A)$};
\node[invisivertex ] (HB) at (2,0){$(H,B)$};
\node[invisivertex ] (H'B) at (4,0){$(H',B)$};
\node[invisivertex ] (H'C) at (6,0){$(H',C)$};

\node[invisivertex,color=green!30!gray] (AA) at (1,1){$(A,A)$};
\node[invisivertex,color=green!30!gray] (BB) at (3,1){$(B,B)$};
\node[invisivertex,color=green!30!gray] (CC) at (5,1){$(C,C)$};

\path[-] (HA) edge [] node[above] {} (AA);
\path[-] (HB) edge [] node[above] {} (AA);
\path[-] (HA) edge [] node[above] {} (BB);
\path[-] (HB) edge [] node[above] {} (BB);

\path[-] (H'B) edge [] node[above] {} (BB);
\path[-] (H'C) edge [] node[above] {} (BB);
\path[-] (H'B) edge [] node[above] {} (CC);
\path[-] (H'C) edge [] node[above] {} (CC);
\end{tikzpicture}
\qquad
\begin{tikzpicture}[]
\node[invisivertex] (HA) at (1,1){$(H,A)$};
\node[invisivertex] (HB) at (1,-1){$(H,B)$};
\node[invisivertex] (H'B) at (3,1){$(H',B)$};
\node[invisivertex] (H'C) at (3,-1){$(H',C)$};

\node[invisivertex,color=green!30!gray] (AA) at (0,0){$(A,A)$};
\node[invisivertex,color=green!30!gray] (BB) at (2,0){$(B,B)$};
\node[invisivertex,color=green!30!gray] (CC) at (4,0){$(C,C)$};

\path[-] (HA) edge [] node[above] {} (AA);
\path[-] (HB) edge [] node[above] {} (AA);
\path[-] (HA) edge [] node[above] {} (BB);
\path[-] (HB) edge [] node[above] {} (BB);

\path[-] (H'B) edge [] node[above] {} (BB);
\path[-] (H'C) edge [] node[above] {} (BB);
\path[-] (H'B) edge [] node[above] {} (CC);
\path[-] (H'C) edge [] node[above] {} (CC);
\end{tikzpicture}
\end{center}
\end{example}

\section{A nerve lemma}
Let $\Top$ denote the category of topological spaces.
If $\cat{C}$ is a small category and $\Phi:\cat{C}\to\Top$ is a functor, the {\bf simplicial replacement} of $\Phi$ is the
simplicial space $\srep_\bdot(\Phi)$ given by
\[ [n]\mapsto \coprod_{x_0\ra x_1\ra \cdots \ra x_n} \Phi(x_0),
\]
where the coproduct is
indexed over chains of composable maps in $\cat{C}$, and the face
(respectively degeneracy) maps are induced by deleting (respectively repeating)
objects in the chain.  
By the {\bf homotopy
colimit} of $\Phi$, we mean the geometric realization
\[ \hocolim_\cat{C} \Phi :=\bigl|\srep_\bdot(\Phi)\bigr|
\]
of the simplicial replacement.  From certain
perspectives this is only a specific model for the homotopy colimit,
but we will not need any models other than this one.

\begin{example}\label{classifying}
Let $\Psi$ denote the
constant functor on $\cat{C}$ whose value is the 1-point space.  Then $\srep_\bdot(\Psi)$
is the nerve of $\cat{C}$ and $|\cat{C}| := \hocolim_\cat{C} \Psi$ is the classifying space
of $\cat{C}$.  If $P$ is a poset, which we regard as a category with a unique morphism from $x$ to $y$ whenever
$x\leq y$, then the classifying space of $P$ coincides with the order complex of $P$, justifying the fact that we denote both by $|P|$.
\end{example}

If $\Phi,\Phi': \cat{C}\ra \Top$ are two functors and $\alpha: \Phi\ra
\Phi'$ is a natural transformation, there is an induced map
$\alpha_*: \hocolim_\cat{C} \Phi \ra \hocolim_\cat{C} \Phi'$.  For example, if we take $\Phi'=\Psi$ to be the constant 
functor from Example \ref{classifying}, we obtain a map $\hocolim_\cat{C} \Phi\ra
|\cat{C}|$.  Recall that a {\bf weak homotopy equivalence} is a map that induces isomorphisms on $\pi_0$ and all homotopy groups.
The following standard result follows in this level of generality from \cite[Theorem A.7]{DI}.

\begin{theorem}\label{whe-whe}
If $\alpha(x): \Phi(x)\ra \Phi'(x)$ is a weak homotopy equivalence for
every object $x\in \cat{C}$, then $\alpha_*: \hocolim_\cat{C} \Phi \ra \hocolim_\cat{C} \Phi'$ is also a weak homotopy
equivalence.
\end{theorem}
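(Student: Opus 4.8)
The plan is to reduce the statement to a standard homotopy-invariance property of geometric realization. By definition $\hocolim_\cat{C}\Phi = |\srep_\bdot(\Phi)|$, and the natural transformation $\alpha$ induces a map of simplicial spaces $\srep_\bdot(\alpha)\colon\srep_\bdot(\Phi)\ra\srep_\bdot(\Phi')$ whose realization is precisely $\alpha_*$. So it is enough to show that $\srep_\bdot(\alpha)$ induces a weak homotopy equivalence on geometric realizations.

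First I would record two facts about $\srep_\bdot(\alpha)$. (i) It is a \emph{levelwise} weak equivalence: at level $n$ the map is $\coprod_{x_0\ra\cdots\ra x_n}\alpha(x_0)$, a coproduct over chains of composable morphisms in $\cat{C}$ of the maps $\alpha(x_0)\colon\Phi(x_0)\ra\Phi'(x_0)$, and this is compatible with the face and degeneracy operators because those only delete or repeat objects and reindex the summands accordingly. Each $\alpha(x_0)$ is a weak homotopy equivalence by hypothesis, and since spheres are connected a coproduct of weak homotopy equivalences is again a weak homotopy equivalence; hence $\srep_n(\alpha)$ is a weak homotopy equivalence for every $n$. (ii) Both $\srep_\bdot(\Phi)$ and $\srep_\bdot(\Phi')$ are \emph{good} simplicial spaces, i.e.\ their degeneracy maps are cofibrations: the $i$-th degeneracy $\srep_n(\Phi)\ra\srep_{n+1}(\Phi)$ acts as the identity on $\Phi(x_0)$ for each chain and identifies the source with the union of those summands of the target indexed by chains whose $i$-th morphism is an identity — the inclusion of a clopen subspace, hence a closed cofibration. (The latching maps admit the same description, so these simplicial spaces are in fact Reedy cofibrant.)

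Given (i) and (ii), the theorem follows from the standard fact that a levelwise weak equivalence between good simplicial spaces induces a weak homotopy equivalence on geometric realizations; in the stated generality this is \cite[Theorem A.7]{DI}, and it goes back to Segal. I expect this last input to be the only substantive ingredient, and I would quote it rather than reprove it. If one did want to reprove it, the argument is to filter the two realizations by skeleta, observe that $|\srep_\bdot(\Phi)|^{(n)}$ is built from $|\srep_\bdot(\Phi)|^{(n-1)}$ by a pushout involving the cofibration $\srep_n(\Phi)\times\bd\Delta^n\hookrightarrow\srep_n(\Phi)\times\Delta^n$ — together with a correction for degenerate simplices, which is exactly where goodness is used — and then conclude by the gluing lemma for weak equivalences along cofibrations and the fact that a sequential colimit along closed inclusions preserves weak equivalences.
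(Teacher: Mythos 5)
Your proposal is correct and takes essentially the same route as the paper: the paper gives no argument beyond citing \cite[Theorem A.7]{DI}, i.e.\ the fact that geometric realization carries levelwise weak equivalences of simplicial spaces to weak equivalences, which is exactly the reduction you make (together with the observation that $\srep_n(\alpha)$ is a coproduct of weak equivalences). Your additional check that the simplicial replacements are good is correct but unnecessary for this citation, since the Dugger--Isaksen result holds for arbitrary simplicial spaces with no cofibrancy hypotheses --- which is precisely why the paper says it holds ``in this level of generality.''
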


For any space $Z$, let $\Op(Z)$ denote the poset of
open sets of $Z$, ordered by inclusion.
Let $P$ be a finite poset and let $\Phi: P\ra \Op(Z)$ be a map of posets.
The canonical map from the homotopy colimit to the colimit induces a {\bf homotopical collapse map} $f_\Phi:\hocolim_P \Phi\to Z$.
The following proposition appears in \cite[Proposition A.5]{Segal}; see \cite{Dugger} for a more detailed proof.

\begin{proposition}
\label{th:main}
% Let $P$ be a finite poset and let $\Phi: P\ra \Op(Z)$ be a map of posets.
For each point $z\in Z$, let $P_z$ be the sub-poset of $P$ consisting of 
elements $x\in P$ such that $z\in \Phi(x)$.  If $|P_z|$ is contractible for every $z\in Z$, then the homotopical collapse map $f_\Phi:\hocolim_P \Phi\ra Z$ is a weak homotopy equivalence.
\end{proposition}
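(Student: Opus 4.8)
The plan is to reduce the statement, via Theorem \ref{whe-whe} and a simple computation of point-preimages, to the classical nerve lemma for the open cover of $Z$ by the sets $\Phi(x)$. First, two elementary reductions. Since $|P_z|$ is contractible it is in particular nonempty, so every $z\in Z$ lies in some $\Phi(x)$; thus $\{\Phi(x)\}_{x\in P}$ is a finite open cover of $Z$, and because $\Phi$ is a diagram of open inclusions we have $\colim_P\Phi=\bigcup_{x\in P}\Phi(x)=Z$, so $f_\Phi$ is precisely the canonical map $\hocolim_P\Phi\to Z$. Next, I would identify the geometric content of the hypothesis. Unwinding $\hocolim_P\Phi=\bigl|\srep_\bullet(\Phi)\bigr|$, a point lying over $z\in Z$ is represented by a chain $x_0\le\cdots\le x_n$ in $P$ with $z\in\Phi(x_0)$, together with a point of $\Delta^n$; since $\Phi$ is monotone, $P_z=\{x:z\in\Phi(x)\}$ is an up-set of $P$, so $x_0\in P_z$ forces the whole chain into $P_z$, and one reads off a homeomorphism $f_\Phi^{-1}(z)\cong |P_z|$, which is contractible by hypothesis. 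This is the geometric heart of the proposition; the remaining task is to upgrade ``all point-preimages contractible'' to ``$f_\Phi$ is a weak homotopy equivalence'', and for this the open-cover structure is essential.

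For the upgrade I would run the standard nerve-lemma machinery for the cover $\mathcal{U}=\{\Phi(x)\}_{x\in P}$. One route compares $\srep_\bullet(\Phi)$ --- whose $n$-th space is $\coprod_{x_0\le\cdots\le x_n}\Phi(x_0)$, equal to $\coprod_{x_0\le\cdots\le x_n}\Phi(x_0)\cap\cdots\cap\Phi(x_n)$ since the chain is increasing --- with the \v{C}ech simplicial space $\check{N}_\bullet$ of $\mathcal{U}$, where $\check{N}_n=\coprod_{(x_0,\dots,x_n)\in P^{n+1}}\Phi(x_0)\cap\cdots\cap\Phi(x_n)$: sending a chain to the corresponding tuple gives a map of simplicial spaces $\srep_\bullet(\Phi)\to\check{N}_\bullet$ lying over the constant simplicial space $Z$. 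The classical nerve lemma gives that $|\check{N}_\bullet|\to Z$ is a weak homotopy equivalence (no contractibility of intersections is needed for this), while the fiberwise picture over a point $z$ --- where the map becomes the inclusion of the nerve of the poset $P_z$ into the nerve of $P_z$ regarded as a discrete set, i.e.\ of $|P_z|$ into an always-contractible space --- is exactly where contractibility of the $|P_z|$ is used to conclude that $|\srep_\bullet(\Phi)|\to|\check{N}_\bullet|$ is a weak homotopy equivalence; the two passages from simplicial spaces to their realizations are applications of Theorem \ref{whe-whe} with $\cat{C}=\Delta^{\mathrm{op}}$. An alternative, more hands-on route is an induction on $|P|$: choose a maximal element $m$, cover $Z$ by the open sets $\bigcup_{x\ne m}\Phi(x)$ and $\Phi(m)$, cover $\hocolim_P\Phi$ by their $f_\Phi$-preimages, observe that the hypothesis is inherited by the three restricted diagrams, and conclude by Mayer--Vietoris once each restricted map is known to be a weak equivalence.

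The step I expect to be the main obstacle is the honest justification of this upgrade: a continuous surjection with contractible point-preimages need not be a weak equivalence, so one genuinely needs that $\mathcal{U}$ is an \emph{open} cover together with a local-to-global device --- the Dold--Thom quasifibration criterion, a Mayer--Vietoris (or descent) spectral sequence, or the small-simplices theorem used to subdivide spheres and discs in $Z$ and lift them into $\hocolim_P\Phi$ one simplex at a time. A related subtlety inside any of these is that the relevant ``local poset'' must remain contractible not merely over points of $Z$ but over the small simplices subordinate to $\mathcal{U}$ that are used in such a lift; this is what ties the argument to simplices with connected image contained in a single $\Phi(x)$. Everything else --- the two reductions, the preimage computation, and the bookkeeping with Theorem \ref{whe-whe} --- is routine.
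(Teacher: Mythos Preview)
The paper does not prove this proposition itself; it cites Segal and refers to Dugger for a detailed argument, so there is no in-paper proof to compare against.

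Your fiber computation $f_\Phi^{-1}(z)\cong|P_z|$ is correct, and you are right that the crux is upgrading ``contractible point-preimages'' to ``weak equivalence''.  However, both concrete routes you sketch for that upgrade have genuine gaps.

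In the \v{C}ech route, the simplicial map $\srep_\bullet(\Phi)\to\check N_\bullet$ is \emph{not} a levelwise weak equivalence, so Theorem~\ref{whe-whe} with $\cat{C}=\Delta^{\mathrm{op}}$ does not apply.  Already at level~$1$, if $a,b\in P$ are incomparable with $\Phi(a)\cap\Phi(b)\neq\emptyset$, the target acquires components $\Phi(a)\cap\Phi(b)$ with no counterpart in the source.  Your fiberwise observation over each $z\in Z$---that both sides become contractible---is true, but converting ``fibers over $Z$ are weakly equivalent'' into ``total spaces are weakly equivalent'' is precisely the problem you are trying to solve; invoking it for $|\srep_\bullet(\Phi)|\to|\check N_\bullet|$ does not reduce the difficulty.

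In the inductive route, the hypothesis is \emph{not} inherited by the restriction to $P\setminus\{m\}$.  Take $P=\{a,b,m\}$ with $a,b<m$ incomparable, $\Phi(a)=(-\infty,1)$, $\Phi(b)=(-1,\infty)$, $\Phi(m)=\R$ in $Z=\R$.  Every $|P_z|$ is contractible, but for $z=0$ one has $(P\setminus\{m\})_z=\{a,b\}$, whose order complex is two points.  So the Mayer--Vietoris induction cannot start.

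The arguments in the cited references carry out in full one of the devices you name in your final paragraph---a partition-of-unity section, or a small-simplices lifting via the open-cover structure.  Your outline captures the architecture and correctly locates the difficulty; what is missing is the execution of one of those devices, not merely the acknowledgment that one is needed.
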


As a corollary, we obtain the following variant of the nerve lemma.

\begin{corollary}\label{Dan's got a lot of nerve}
Let $P$ and $\Phi$ be as above, with $|P_z|$ contractible
for every $z\in Z$.  If we further assume that $\Phi(x)$ is
contractible for every $x\in P$, then $Z$ has the same weak homotopy class as $|P|$.
\end{corollary}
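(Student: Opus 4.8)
The plan is to splice together Proposition \ref{th:main} and Theorem \ref{whe-whe}, using the constant functor $\Psi$ of Example \ref{classifying} as the intermediary, so that the corollary becomes a purely formal consequence of the two results that immediately precede it. First I would apply Proposition \ref{th:main} directly: the hypothesis that $|P_z|$ is contractible for every $z\in Z$ is exactly what is needed to conclude that the homotopical collapse map $f_\Phi:\hocolim_P\Phi\to Z$ is a weak homotopy equivalence. This already identifies $\hocolim_P\Phi$ with $Z$ up to weak homotopy, and is the only place the first hypothesis gets used.

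Next I would compare $\hocolim_P\Phi$ with $|P|$. Composing the poset map $\Phi:P\to\Op(Z)$ with the inclusion $\Op(Z)\hookrightarrow\Top$, regard $\Phi$ as a functor $P\to\Top$, and let $\Psi:P\to\Top$ be the constant functor at the one-point space, so that $\hocolim_P\Psi=|P|$ by Example \ref{classifying}. There is a (unique) natural transformation $\alpha:\Phi\to\Psi$ collapsing each value to the point, and the second hypothesis says precisely that each component $\alpha(x):\Phi(x)\to\ast$ is a weak homotopy equivalence, since $\Phi(x)$ is contractible. Theorem \ref{whe-whe} then yields that $\alpha_*:\hocolim_P\Phi\to|P|$ is a weak homotopy equivalence.

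Combining the two steps, $Z$ and $|P|$ are joined by the zig-zag
\[
Z \;\xleftarrow{\ f_\Phi\ }\; \hocolim_P\Phi \;\xrightarrow{\ \alpha_*\ }\; |P|
\]
of weak homotopy equivalences, so they lie in the same weak homotopy class, as claimed. I do not expect any real obstacle here, since everything is quoted from the preceding results; the only point requiring a word of care is that "$Z$ has the same weak homotopy class as $|P|$" is being read as "connected by a zig-zag of weak homotopy equivalences" rather than "admits a single weak equivalence in a chosen direction." (When $Z$ has the homotopy type of a CW complex one could promote this to an honest homotopy equivalence $Z\simeq|P|$, but the statement as written only asks for the weak version, which is immediate from the zig-zag above.)
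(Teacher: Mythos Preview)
Your proof is correct and follows exactly the same approach as the paper: both combine Proposition~\ref{th:main} with Theorem~\ref{whe-whe} (via the constant functor of Example~\ref{classifying}) to exhibit the zig-zag $Z \xleftarrow{f_\Phi} \hocolim_P\Phi \xrightarrow{\alpha_*} |P|$ of weak homotopy equivalences. The paper's version is just more terse, and your explicit remark about the zig-zag interpretation of ``same weak homotopy class'' is a helpful clarification.
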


\begin{proof}
Example \ref{classifying} and Theorem \ref{whe-whe} together imply that the induced map $\hocolim_P\Phi\to |P|$
is a weak homotopy equivalence, and 
Proposition \ref{th:main} asserts that the homotopical collapse map $f_\Phi:\hocolim_P\Phi\to Z$ is also a weak homotopy equivalence.
\end{proof}

\section{The space \boldmath{$\cZ(\cA,\cK)$}}
Let $\cT$ be the set of topes of $\cL(\cA,\cK)$ and define $$\cZ(\cA,\cK) := \cK\times\cT/\sim,$$
where $(v, T) \sim (v',T')$ if and only if $v=v'$ and $v\notin H_e$ for all $e\in\Sep(T,T')$.
That is, we take one copy of the convex open set $\cK$ for each tope $T$, and we glue them together along the complement of the hyperplanes
that separate the associated regions $F_T\cap\cK$.
In the case where $\cK=V$, this definition has appeared in \cite{non-Haus} and \cite{bug}.  There is a natural map $\pi:\cZ(\cA,\cK)\to\cK$ taking
$(v,T)$ to $v$.  For any $v\in F_X\cap\cK$, the fiber $\pi^{-1}(v)$ is a finite set in bijection with the set of topes $T$ with $X\leq T$. 
This is because, for any $T$ (not necessarily lying above $X$), we have $(v,T)\sim (v, X\circ T)$ and $X\leq X\circ T$.

\begin{example}\label{two points 3}
Returning to the example discussed in Examples \ref{two points 1} and \ref{two points 2},
the space $\cZ(\cA,V)$ is a line with two double points:
\begin{center}
      \begin{tikzpicture}[scale=.6]
    
       \node[invisivertex] (R-) at (-4,0){};
       \node[invisivertex] (R+) at (4,0){};
    
      %  \node[invisivertex] (H) at (0,1){$x=0$};
       \node[point] (H) at (-2,-.5){};
       \node[point] (H) at (-2,.5){};
    
       \node[point] (H') at (2,-.5){};
       \node[point] (H') at (2,.5){};
    
       \path[<->] (R-) edge [] node[above] {} (R+);
       \node[puncture] (H) at (-2,0){};
       \node[puncture] (H') at (2,0){};
    \end{tikzpicture}
    \end{center}
The images in $\cZ(\cA,V)$ of $V\times \{A\}$, $V\times\{B\}$, and $V\times\{C\}$ are as follows:
\begin{center}
    \begin{tikzpicture}[scale=.6]
    % \node[invisivertex] (X) at (-5,0){$V_A$};
     \node[invisivertex] (R-) at (-4,0){};
     \node[invisivertex] (R+) at (4,0){};
  
    %  \node[invisivertex] (H) at (0,1){$x=0$};
    %  \node[point] (H) at (-2,-.5){};
     \node[point] (H) at (-2,.5){};
  
     \node[invisivertex] (H') at (2,-.5){};
     \node[point] (H') at (2,.5){};
  
     \path[-] (R-) edge [] node[above] {} (R+);
     \node[puncture] (H) at (-2,0){};
     \node[puncture] (H') at (2,0){};
  \end{tikzpicture}
  \begin{tikzpicture}[scale=.6]
    % \node[invisivertex] (X) at (-5,0){$V_B$};
     \node[invisivertex] (R-) at (-4,0){};
     \node[invisivertex] (R+) at (4,0){};
  
    %  \node[invisivertex] (H) at (0,1){$x=0$};
     \node[point] (H) at (-2,-.5){};
    %  \node[point] (H) at (-2,.5){};
  
    %  \node[point] (H') at (2,-.5){};
     \node[point] (H') at (2,.5){};
  
     \path[-] (R-) edge [] node[above] {} (R+);
     \node[puncture] (H) at (-2,0){};
     \node[puncture] (H') at (2,0){};
  \end{tikzpicture}
  \begin{tikzpicture}[scale=.6]
     \node[invisivertex] (R-) at (-4,0){};
     \node[invisivertex] (R+) at (4,0){};
  
    %  \node[invisivertex] (H) at (0,1){$x=0$};
     \node[point] (H) at (-2,-.5){};
    %  \node[point] (H) at (-2,.5){};
  
     \node[point] (H') at (2,-.5){};
    %  \node[point] (H') at (2,.5){};
  
     \path[-] (R-) edge [] node[above] {} (R+);
     \node[puncture] (H) at (-2,0){};
     \node[puncture] (H') at (2,0){};
  \end{tikzpicture}
  \end{center}
\end{example}

Recall that we have defined $$\cM(\cA,\cK) := \{v+iw\in M(\cA)\mid v\in\cK\},$$ and consider the projection $\rho:\cM(\cA,\cK)\to\cK$ given
by putting $\rho(v+iw)=v$.  We formulate and prove the following slightly stronger version of Theorem \ref{thm:z-pair}.
%  as a corollary of the corresponding slightly stronger version of Theorem \ref{thm:z}.

\begin{theorem}\label{stronger}
There exists a canonical map $\varphi:\cM(\cA,\cK)\to\cZ(\cA,\cK)$ that is a principal bundle with structure group $V^*$, and furthermore has the property
that $\pi\circ\varphi=\rho$.
\end{theorem}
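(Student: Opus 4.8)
The plan is to construct $\varphi$ directly and locally, using the tope structure of $\cL(\cA,\cK)$, and then check the bundle condition chart-by-chart. For each $e\in E$ write $f_e:V\to\R$ for an affine functional cutting out $H_e$ with $H_e^+=\{f_e>0\}$, and let $\ell_e\in V^*$ be its linear part; thus $f_e\otimes\C$ cuts out $H_e\otimes\C$ and has imaginary part $w\mapsto \ell_e(w)$. Given $v+iw\in\cM(\cA,\cK)$, the key observation is that the sign vector recording, for each $e$, whether $\Im(f_e\otimes\C)(v+iw)=\ell_e(w)$ is positive, negative, or zero — call it $\sigma(w)\in\{-,0,+\}^E$ — may fail to be a covector, but the composite $X\circ\sigma(w)$ is, where $X$ is the covector of the face $F_X$ containing $v$; moreover $v\notin H_e$ exactly when $X_e\neq 0$, so the tope $T(v+iw):=X\circ\sigma(w)\circ (\text{any tope above }X)$ is well-defined once we note $(v,T)\sim(v,X\circ\sigma(w))$-type identifications. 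Concretely: pick any tope $T_0\geq X$ in $\cL(\cA,\cK)$ — these exist and $X\circ T_0$ is a tope — and set $T(v+iw):=\sigma(w)\circ T_0$ once one checks this is independent of $T_0$ modulo $\sim$. Then define $\varphi(v+iw):=(v,\,T(v+iw))\in\cZ(\cA,\cK)$. One checks $\pi\circ\varphi=\rho$ is immediate, and continuity of $\varphi$ follows because $\sigma(w)_e$ is locally constant on the locus where $v\notin H_e$ and the gluing in $\cZ(\cA,\cK)$ only identifies topes along exactly that locus.

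Next I would identify the $V^*$-action and the trivializations. The group $V^*$ acts on $\cM(\cA,\cK)$ by $\xi\cdot(v+iw):=v+i(w+\xi^\sharp)$, where $\xi^\sharp\in V$ is chosen so that $\ell_e(\xi^\sharp)$ realizes $\xi(\text{something})$ — more honestly, the action is $v+iw\mapsto v+i w$ translated in the imaginary fiber by the element of $V$ dual to $\xi$ under a fixed identification, but the clean statement is: $\cM(\cA,\cK)\to\cK$, $v+iw\mapsto v$, has each fiber $\rho^{-1}(v)$ equal to $V$ minus the linear hyperplanes $\{w:\ell_e(w)=-f_e(v)\}$ for those $e$ with $v\in H_e$ (i.e. $f_e(v)=0$, giving $\ell_e(w)\neq 0$), so the fiber is $V\setminus\bigcup_{e:\,v\in H_e}\ker\ell_e$, a disjoint union of open cones, one for each tope above $X$. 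Thus $\rho$ itself is not a bundle but $\varphi$ sorts the fiber by tope: $\varphi^{-1}(v,T)=\{v+iw: \sigma(w)\circ T_0 = T\}$ is the single open cone corresponding to $T$, and translation by $V^*$ (acting transitively and freely on each such convex cone after quotienting by... ) — here is the real point — the fiber $\varphi^{-1}(v,T)$ is an open convex cone in the affine space $v+iV$, hence $V^*$-equivariantly contractible; to get a genuine principal $V^*$-bundle one uses that $\cZ(\cA,\cK)$ is glued from copies of $\cK$ and over each copy $\cK\times\{T\}$ the preimage is $\{v+iw: w\in \text{cone}_T(v)\}$, and the map $w\mapsto w$ followed by a fixed affine chart exhibits this as $\cK\times(\text{cone})$, which is $V^*$-equivariantly trivial once we pick a basepoint section — and the sections over different topes agree on overlaps because the overlaps are where the relevant $\ell_e$ are nonzero, matching the $\sim$ relation exactly.

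The main obstacle I anticipate is the global structure-group claim rather than the pointwise bijection: one must show $\varphi$ is a locally trivial principal $V^*$-bundle, not merely a surjection with contractible fibers, and this requires producing local sections of $\varphi$ over each chart $\cK\times\{T\}\subset\cZ(\cA,\cK)$ that are compatible with the $V^*$-action, and then verifying the transition functions between the chart for $T$ and the chart for $T'$ are given by translation by a locally constant (in fact constant on each connected piece of the overlap) element of $V^*$ determined by $\Sep(T,T')$. Concretely the section over $\cK\times\{T\}$ sends $v\mapsto v+i w_T(v)$ where $w_T(v)$ is chosen in the open cone $\{w:\operatorname{sign}\ell_e(w)=T_e$ for all $e$ with $v\in H_e\}$; continuity and the cocycle condition for the $w_T$ is the technical heart. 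I would handle this by fixing, once and for all, a point $z_e\in\R$ and functionals so that a single global affine section $v\mapsto v+i\,c$ for a fixed generic $c\in V$ exists on a dense open set and extends by the $V^*$-action; then the transition from $T$ to $T'$ differs by the unique $\xi\in V^*$ with $\ell_e$-components flipped precisely on $\Sep(T,T')$, which lives in $V^*$ because the $\ell_e$ span (or, in the non-spanning case, one works in the quotient, exactly as in the semisimplification remark). Finally, $\pi\circ\varphi=\rho$ and the bundle projection $\cZ(\cA,\cK)\to\cK$ composed with $\varphi$ giving $\rho$ is a formal consequence, and weak homotopy equivalence of $\cM(\cA,\cK)$ with $\cZ(\cA,\cK)$ follows since $V^*$ is contractible.
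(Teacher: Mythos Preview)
Your route diverges sharply from the paper's.  The paper does not construct $\varphi$ from scratch: it simply invokes \cite{non-Haus} for the case $\cK=V$, observes that $\cZ(\cA,\cK)=\pi^{-1}(\cK)$ sits as an open subset of $\cZ(\cA,V)$ with preimage $\cM(\cA,\cK)$, and notes that the restriction of a principal bundle to an open set is again a principal bundle.  That is the entire argument.

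Your attempt to rebuild the content of \cite{non-Haus} has a real gap in the principal bundle step.  The map $\varphi(v+iw)=(v,T(v+iw))$ you write down is the right continuous surjection, and your description of the fiber $\varphi^{-1}(v,T)$ as the open convex cone $\{w\in V:\operatorname{sign}\ell_e(w)=T_e\text{ for all }e\text{ with }v\in H_e\}$ is correct.  But an open convex cone in $V$ is \emph{not} a $V^*$-torsor under the translation action you propose: when $v$ lies on at least one hyperplane, the fiber is a proper half-space (or intersection of half-spaces), and translation by $V\cong V^*$ does not act transitively on it, let alone freely and transitively.  You seem to notice this (``after quotienting by\ldots'', the hedging about $\xi^\sharp$), but you never produce an action of $V^*$ for which the fibers are torsors.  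Already in the baby case $\cA=\{0\}\subset\R$, your $\varphi$ has fiber $\R_{>0}$ over each of the two origin points of $\cZ(\cA,V)$, and $\R_{>0}$ is not an $\R$-torsor by translation.  The transition-function discussion inherits the same problem: differences of sections landing in $V^*$ presupposes an additive structure on fibers that you have not set up.

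What is actually needed, if you want to avoid citing \cite{non-Haus}, is a nontrivial reparametrization of the imaginary direction (morally a logarithm along each $\ell_e$) that converts these cones into genuine affine $V^*$-spaces in a way compatible across charts.  That is the content of the cited reference, and it is not recovered by the section-plus-translation picture you sketch.
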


\begin{proof}
In the case where $\cK=V$, a map $\varphi:\cM(\cA,V)\to\cZ(\cA,V)$ is constructed in \cite{non-Haus}.  
Now we are considering an open set $\cZ(\cA,\cK) = \rho^{-1}(\cK)\subset\cZ(\cA,V)$,
along with its preimage $$\cM(\cA,\cK) = \pi^{-1}(\cK) = \varphi^{-1}\rho^{-1}(\cK) = \varphi^{-1}(\cZ(\cA,\cK))\subset\cM(\cA,V).$$
The restriction of a principal bundle to an open subset is again a principal bundle.
\end{proof}

\vspace{-\baselineskip}

\begin{example}\label{two points 4}
Consider again our running example of two points in a line.  The map from $\cM(\cA,V)$ to $\cZ(\cA,V)$ can be visualized as squishing $\cM(\cA,V)$ vertically:
\begin{center}
\begin{tikzpicture}[]
% Draw the square
\node[invisivertex,inner sep=7] (labelM) at (4.5,-1){$\cM(\cA,V)$};
\draw[] (-2,-2.5) rectangle (2,.5);
\path[dashed] (-2,-1) edge (2,-1);
\draw[fill=white] (-1,-1) circle (0.1);
\draw[fill=white] (1,-1) circle (0.1);

\node[invisivertex,inner sep=7] (labelZ) at (4.5,-3.75){$\cZ(\cA,V)$};
\node[invisivertex] (R-) at (-3,-3.75){};
\node[invisivertex] (R+) at (3,-3.75){};
\node[point] (H) at (-1,-3.45){};
\node[point] (H) at (-1,-4.05){};
\node[point] (H') at (1,-3.45){};
\node[point] (H') at (1,-4.05){};
\path[<->] (R-) edge [] node[above] {} (R+);
\node[puncture] (H) at (-1,-3.75){};
\node[puncture] (H') at (1,-3.75){};

\node[invisivertex,inner sep=7] (labelV) at (4.5,-5){$V$};
\node[invisivertex] (2R-) at (-3,-5){};
\node[invisivertex] (2R+) at (3,-5){};
\path[<->] (2R-) edge [] node[above] {} (2R+);
\node[point] (2H) at (-1,-5){};
\node[point] (2H') at (1,-5){};

\path[->,%green!30!gray,dashed,thick
] (labelM) edge %[bend left] 
node[right] {$\varphi$} (labelZ);
\path[->,%green!30!gray,dashed,thick
] (labelZ) edge %[bend left] 
node[right] {$\pi$} (labelV);
\end{tikzpicture}
\end{center}
It is clear that the fiber over every point of $\cZ(\cA,V)$ is homeomorphic to a line.  It is not obvious (but it is true) that this is a fiber
bundle, and in fact a principal bundle whose structure group can be canonically identified with $V^*$.
\end{example}

For any covector $X\in\cL$ and tope $T\in\cT$, let
$$\cK_{X,T} := \cK\cap \bigcap_{X_e=0}H_e^{T_e} \subset\cK$$
be the unique component of $\cK\setminus\bigcup_{X_e=0}H_e$ containing $\cK\cap F_T$.
(Note that this makes sense regardless of whether or not $X\leq T$.)
Let
$\tilde U_{X,T} := \cK_{X,T} \times \{T\} \subset \cK\times\cT$,
and let $U_{X,T}$ be the image of $\tilde U_{X,T}$ in $\cZ(\cA,\cK)$.
% For example, $\cK_{T,T} = \cK$, and $U_{T,T}$ is the corresponding copy of $\cK$ in $\cZ(\cA,\cK)$.
Since $U_{X,T}\cong\tilde U_{X,T}\cong\cK_{X,T}$, it is contractible.

\begin{example}\label{two points 5}
Returning to the example of two points in a line, and labeling the
covectors as in Example \ref{two points 1}, we have the following picture of $U_{H,B}$:
\begin{center}
  \begin{tikzpicture}[scale=.6]
   %  \node[invisivertex] (X) at (-8,0){$(X_H)^R$};
     \node[invisivertex] (R-) at (-4,0){};
     \node[invisivertex] (RH) at (-2,0){};
     \node[invisivertex] (R+) at (4,0){};
  
    %  \node[invisivertex] (H) at (0,1){$x=0$};
    %  \node[point, color=green!30!gray] (H) at (-2,-.5){};
    %  \node[point] (H) at (-2,.5){};
  
     \node[point,color=green!30!gray] (H') at (2,.5){};
    %  \node[point] (H') at (2,.5){};
  
    \path[-] (R-) edge [] node[above] {} (RH);
    \path[-] (RH) edge [green!30!gray,ultra thick] node[above] {} (R+);
     \node[puncture] (H) at (-2,0){};
     \node[puncture] (H') at (2,0){};
  \end{tikzpicture}
\end{center}
\end{example}

\begin{lemma}\label{containment}
If $(X,T)\preceq(X',T')$, then $U_{X,T}\subset U_{X',T'}$.
\end{lemma}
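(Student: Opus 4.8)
The plan is to unwind the definitions and follow a single point through the gluing in $\cZ(\cA,\cK)$. Fix a point $z\in U_{X,T}$; by construction $z$ is the image of a pair $(v,T)\in\tilde U_{X,T}$, i.e.\ $v\in\cK_{X,T}$. To show $z\in U_{X',T'}$ it is enough to produce a representative of $z$ of the form $(v,T')$ with $v\in\cK_{X',T'}$, and this splits into two tasks: first, that $(v,T)\sim(v,T')$, so that $(v,T')$ genuinely represents $z$; second, that $v\in\cK_{X',T'}$.

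For the first task I would recast the relation $(v,T)\sim(v,T')$ — namely $v\notin H_e$ for all $e\in\Sep(T,T')$ — as the combinatorial claim that $X_e=0$ for every $e\in\Sep(T,T')$. To prove this, fix such an $e$. Since $T'_e\in\{-,+\}$ with $T'_e\neq T_e$ and $T'=X'\circ T$, the composition rule forces $X'_e\neq 0$ and hence $X'_e=T'_e$. Were $X_e\neq 0$, then $X\leq T$ would give $X_e=T_e$ while $X\leq X'$ would give $X_e=X'_e=T'_e$, so $T_e=T'_e$, contradicting $e\in\Sep(T,T')$. Thus $X_e=0$, and then the defining inclusion $\cK_{X,T}\subset H_e^{T_e}$ (valid because $X_e=0$) gives $v\in H_e^{T_e}$, so $v\notin H_e$. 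This yields $(v,T)\sim(v,T')$, so $z$ is also the image of $(v,T')$.

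For the second task I would verify the defining conditions of $\cK_{X',T'}=\cK\cap\bigcap_{X'_e=0}H_e^{T'_e}$ one coordinate at a time. Let $e$ be an index with $X'_e=0$. From $X\leq X'$ we get $X_e=0$ as well, so $v\in H_e^{T_e}$ since $v\in\cK_{X,T}$; and because $X'_e=0$, the composition rule gives $T'_e=(X'\circ T)_e=T_e$, so in fact $v\in H_e^{T'_e}$. As $e$ was arbitrary and $v\in\cK$, this shows $v\in\cK_{X',T'}$, hence $(v,T')\in\tilde U_{X',T'}$ and $z\in U_{X',T'}$.

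I do not anticipate a real obstacle: the whole argument is bookkeeping with the composition operation and the partial order on sign vectors. The only step requiring a moment's thought — and the heart of the first task — is extracting $X_e=0$ on $\Sep(T,T')$ from the combination of $X\leq T$, $X\leq X'$, and $T'=X'\circ T$; everything else is unwinding notation.
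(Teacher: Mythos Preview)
Your proof is correct and follows the same approach as the paper's, which tersely records only the key observation that whenever $X'_e=0$ one has $X_e=0$ and $T_e=T'_e$ (yielding $\cK_{X,T}\subset\cK_{X',T'}$) and then declares that ``the result follows.'' Your explicit verification that $(v,T)\sim(v,T')$, via the argument that $X_e=0$ for every $e\in\Sep(T,T')$, fills in precisely the step the paper leaves implicit.
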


\begin{proof}
Suppose that $(X,T)\preceq(X',T')$.  This implies that, for all $e$ such that $X'_e=0$, we have $X_e=0$ and $T_e=T'_e$.
The result follows.
\end{proof}
\vspace{-\baselineskip}

\begin{lemma}\label{Kz}
Suppose that $v\in F_X\cap\cK$.  Then $(v,T)\sim(v,T')$ if and only if $\cK_{X,T} = \cK_{X,T'}$.
\end{lemma}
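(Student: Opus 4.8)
The plan is to reduce both sides of the biconditional to a single combinatorial statement about $T$ and $T'$, namely the condition $(\ast)$: for every $e\in E$ with $X_e=0$ one has $T_e=T'_e$. Once this is done the lemma is immediate.

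First I would unwind the left-hand side. Since $v\in F_X\cap\cK$, we have $v\in H_e$ precisely when $X_e=0$. Hence, by the definition of $\sim$ (the condition $v=v'$ being automatic as both first coordinates are $v$), the relation $(v,T)\sim(v,T')$ holds if and only if $\Sep(T,T')$ contains no index $e$ with $X_e=0$. Because $T$ and $T'$ are topes, $F_T\cap\cK$ and $F_{T'}\cap\cK$ are connected components of $\cK\setminus\bigcup_{e\in E}H_e$; being nonempty and disjoint from every hyperplane, this forces $T$ and $T'$ to have no zero coordinates, so for an index $e$ the statement $e\notin\Sep(T,T')$ is simply $T_e=T'_e$. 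Thus $(v,T)\sim(v,T')$ is equivalent to $(\ast)$.

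Next I would unwind the right-hand side. If $(\ast)$ holds then the defining intersections $\bigcap_{X_e=0}H_e^{T_e}$ and $\bigcap_{X_e=0}H_e^{T'_e}$ are literally equal, so $\cK_{X,T}=\cK_{X,T'}$. Conversely, if $(\ast)$ fails, pick $e_0$ with $X_{e_0}=0$ and $T_{e_0}\neq T'_{e_0}$; since topes have full support, $\{T_{e_0},T'_{e_0}\}=\{-,+\}$, so $\cK_{X,T}\subseteq H_{e_0}^{T_{e_0}}$ and $\cK_{X,T'}\subseteq H_{e_0}^{T'_{e_0}}$ are disjoint. But both are nonempty, since $\cK_{X,T}\supseteq\cK\cap F_T\neq\emptyset$ (as $T\in\cL(\cA,\cK)$) and likewise for $T'$; hence $\cK_{X,T}\neq\cK_{X,T'}$. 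So the right-hand side is also equivalent to $(\ast)$, and combining the two equivalences proves the lemma. There is no genuinely hard step here; the one point that deserves a moment's care is the claim that topes have full support, which is exactly what the bijection between topes and chambers of $\cK$ (recalled before the definition of the Salvetti poset) gives us, a chamber being a nonempty open set that avoids all hyperplanes. The rest is bookkeeping with the definitions of $F_X$, $\Sep$, and $\cK_{X,T}$.
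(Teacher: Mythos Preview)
Your proof is correct and takes essentially the same approach as the paper: both reduce each side of the biconditional to the single condition that $T_e=T'_e$ whenever $X_e=0$ (the paper phrases this geometrically as ``$F_T$ and $F_{T'}$ lie on the same side of every hyperplane containing $v$''). Your version simply spells out in detail the steps the paper's one-line proof leaves to the reader, including the observation that topes have full support.
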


\begin{proof}
Both conditions are equivalent to the statement that the chambers $F_T$ and $F_{T'}$ lie on the same side of every hyperplane containing $v$.
\end{proof}
\vspace{-\baselineskip}

\begin{example}
Continuing our running example, we illustrate
the cone $\cK_{H,A}$ on the left and the cone $\cK_{H,B} = \cK_{H,C}$ on the right:
  \begin{center}
  \begin{tikzpicture}[]
   %  \node[invisivertex] (X) at (-8,0){$(X_H)^R$};
     \node[invisivertex] (R-) at (-2,0){};
     \node[invisivertex] (RH) at (-1,0){};
     \node[invisivertex] (R+) at (2,0){};
  
    \path[<-] (R+) edge [] node[above] {} (RH);
    \path[->] (RH) edge [green!30!gray,ultra thick] node[above] {} (R-);
     \node[puncture] (H) at (-1,0){};
     \node[point] (H') at (1,0){};
  \end{tikzpicture}
  \qquad   \qquad
  \begin{tikzpicture}[]
   %  \node[invisivertex] (X) at (-8,0){$(X_H)^R$};
     \node[invisivertex] (R-) at (-2,0){};
     \node[invisivertex] (RH) at (-1,0){};
     \node[invisivertex] (R+) at (2,0){};
  
    \path[<-] (R-) edge [] node[above] {} (RH);
    \path[->] (RH) edge [green!30!gray,ultra thick] node[above] {} (R+);
     \node[puncture] (H) at (-1,0){};
     \node[point,color=green!30!gray] (H') at (1,0){};
  \end{tikzpicture}
\end{center}
Writing $F_H = \{v\}$, we have $(v,A)\not\sim (v,B) \sim (v,C)$ because $F_A$ lies on one side of $F_H$ while $F_B$ and $F_C$ lie on the other side.
\end{example}

Given an element $z\in\cZ(\cA,\cK)$ represented by a pair $(v,T)\in\cK\times\cT$, let $X\in\cL$ be the unique covector such that $v\in F_X$,
and let $\cK_z := \cK_{X,T}$.  Lemma \ref{Kz} tells us that this is well defined.  

\begin{lemma}\label{confusing}
Let $(X,T)$ and $(Y,S)$ be elements of $\Sal(\cA,\cK)$.  Fix an element $v\in F_Y$, and let $z\in\cZ(\cA,\cK)$ be the element represented
by the pair $(v,S)\in\cK\times\cT$.  The following are equivalent:
\begin{enumerate}
\item[{\em (1)}] $T = X\circ Y$ and $S = Y\circ X$
\item[{\em (2)}] $T = X\circ Y$ and $F_X\cap \cK\subset\cK_z$
\item[{\em (3)}] $F_Y$ and $F_T$ lie on the same side of every hyperplane containing
$F_X$, and $F_X$ and $F_S$ lie on the same side of every hyperplane containing $F_Y$
\item[{\em (4)}] $z\in U_{X,T}$.
\end{enumerate}
\end{lemma}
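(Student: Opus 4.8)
The plan is to reduce all four statements to elementary conditions on the sign vectors $X,Y,T,S$ restricted to the two index sets $Z_X := \{e : X_e = 0\}$ and $Z_Y := \{e : Y_e = 0\}$ (i.e., the hyperplanes through $F_X$ and through $F_Y$, respectively), and then to observe that these conditions coincide. First I would record the standing facts: since $(X,T),(Y,S)\in\Sal(\cA,\cK)$ we have $X\le T$ and $Y\le S$, so $T_e=X_e$ for all $e\notin Z_X$ and $S_e=Y_e$ for all $e\notin Z_Y$; since $T$ and $S$ are topes, the bijection between topes and chambers forces $T_e\ne 0$ and $S_e\ne 0$ for every $e$; the hypothesis $(v,S)\in\cK\times\cT$ puts $v\in F_Y\cap\cK$ (which is nonempty as $Y,X\in\cL(\cA,\cK)$), so $v\in H_e\iff e\in Z_Y$; and $\cK_z=\cK_{Y,S}$ by the definition of $\cK_z$ together with Lemma~\ref{Kz}, which makes this well defined.

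Next I would translate each condition into a statement about signs on $Z_X$ and $Z_Y$. \emph{(1)} Given $X\le T$, the equation $T=X\circ Y$ amounts to $T_e=Y_e$ for all $e\in Z_X$; since $T_e\ne 0$, this also forces $Y_e\ne 0$ on $Z_X$, hence $Z_X\cap Z_Y=\emptyset$. Symmetrically, $S=Y\circ X$ amounts to $S_e=X_e\ (\ne 0)$ for all $e\in Z_Y$. \emph{(3)} This is literally the geometric rephrasing: ``$F_Y$ and $F_T$ lie on the same side of each $H_e$, $e\in Z_X$'' means ``$Y_e=T_e\ne 0$ for $e\in Z_X$'', and ``$F_X$ and $F_S$ lie on the same side of each $H_e$, $e\in Z_Y$'' means ``$X_e=S_e\ne 0$ for $e\in Z_Y$''. \emph{(2)} Here $F_X\cap\cK\subset\cK_z=\cK\cap\bigcap_{e\in Z_Y}H_e^{S_e}$; since $F_X\cap\cK\ne\emptyset$ and a point of $F_X$ lies in the \emph{open} half-space $H_e^{S_e}$ only when $X_e=S_e\ne 0$, this is equivalent to ``$X_e=S_e\ne 0$ for all $e\in Z_Y$'', the same as the second half of (1). \emph{(4)} The point $z=[(v,S)]$ lies in $U_{X,T}$, the image of $\cK_{X,T}\times\{T\}$, iff there is $(p,T)\sim(v,S)$ with $p\in\cK_{X,T}$, i.e.\ iff $v\in\cK_{X,T}$ and $v\notin H_e$ for every $e\in\Sep(T,S)$; the first condition says $Y_e=T_e\ne 0$ for all $e\in Z_X$ (the first half of (1)), and the second says $\Sep(T,S)\cap Z_Y=\emptyset$, i.e.\ $T_e=S_e$ for all $e\in Z_Y$ (using that $T,S$ have full support).

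Finally I would assemble these. Conditions (1), (2), and (3) all unwind to ``$T_e=Y_e\ne 0$ for $e\in Z_X$ and $S_e=X_e\ne 0$ for $e\in Z_Y$'', so they are equivalent. For (4) the first halves already agree, so I may assume $T_e=Y_e\ne 0$ for all $e\in Z_X$; then $Z_X\cap Z_Y=\emptyset$, so for $e\in Z_Y$ we have $X_e\ne 0$ and hence $T_e=X_e$ by $X\le T$, whence ``$T_e=S_e$ for $e\in Z_Y$'' is equivalent to ``$X_e=S_e$ for $e\in Z_Y$'' with the common value automatically nonzero. Thus (4)$\iff$(1) as well. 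There is no serious obstacle here beyond careful bookkeeping; the one subtlety to watch is the distinction between the open half-space $H_e^{S_e}$ and the hyperplane $H_e$ in the analysis of (2) and (4) — this is precisely what makes the ``$\ne 0$'' clauses come for free — together with the (easily missed) fact that $T=X\circ Y$ silently forces $Z_X$ and $Z_Y$ to be disjoint.
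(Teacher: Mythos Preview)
Your proof is correct and follows essentially the same route as the paper's. The paper declares the equivalence of (1), (2), and (3) ``immediate from the definitions'' and then argues (3)$\iff$(4) geometrically---observing that $v\in\cK_{X,T}$ forces no hyperplane to contain both $F_X$ and $F_Y$, and then using $X\le T$ to swap $F_T$ for $F_X$ in the second clause---which is exactly your $Z_X\cap Z_Y=\emptyset$ step together with $T_e=X_e$ on $Z_Y$, just phrased in terms of faces rather than sign vectors. Your version is more explicit and does the bookkeeping the paper leaves to the reader, but the logical skeleton is identical.
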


\begin{proof}
The equivalence of (1), (2), and (3) is immediate from the definitions.  The equivalence of the fourth condition to the other three is slightly trickier.
% All four conditions imply that there are no hyperplanes that contain both $F_X$ and $F_Y$, or equivalently that there is no $e$ such that $X_e=0=Y_e$.
% Thus we will assume this condition throughout.

The statement that $z\in U_{X,T}$ is equivalent to the statement that $v\in \cK_{X,T}$ and $(v,S)\sim(v,T)$.
We have $v\in \cK_{X,T}$ if and only if $F_Y$ and $F_T$ lie on the same side of every hyperplane containing $F_X$,
which in particular implies that there is no hyperplane containing both $F_X$ and $F_Y$.
We have $(v,S)\sim(v,T)$ if and only if $F_S$ and $F_T$ lie on the same side of every hyperplane containing $F_Y$.
Given the statement that there is no hyperplane containing both $F_X$ and $F_Y$, along with the fact that $F_X$ lies on the boundary of $F_T$,
this is equivalent to the condition that $F_S$ and $F_X$ lie on the same side of every hyperplane containing $F_Y$.
This establishes the equivalence of (3) and (4).
\end{proof}

\vspace{-\baselineskip}
\begin{proposition}\label{whe}
The Salvetti complex $|\Sal(\cA,\cK)|$ is weakly homotopy equivalent to $\cZ(\cA,\cK)$.
\end{proposition}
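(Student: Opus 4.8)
The plan is to invoke Corollary~\ref{Dan's got a lot of nerve} with $P := \Sal(\cA,\cK)$ (a finite poset), $Z := \cZ(\cA,\cK)$, and the map of posets $\Phi\colon P\to\Op(Z)$ defined by $\Phi(X,T) := U_{X,T}$. That $\Phi$ is order-preserving is exactly Lemma~\ref{containment}, and each value $\Phi(X,T) = U_{X,T}\cong\cK_{X,T}$ is contractible, as already observed. So the proof comes down to verifying, for every $z\in\cZ(\cA,\cK)$, that the order complex of
$$P_z \ :=\ \{(X,T)\in\Sal(\cA,\cK)\mid z\in U_{X,T}\}$$
is contractible.

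Fix $z$ and represent it by a pair $(v,S)$ with $v\in F_Y$; after replacing $S$ by $Y\circ S$ we may assume $(Y,S)\in\Sal(\cA,\cK)$ without changing the point $z$, so that Lemma~\ref{confusing} applies to every pair $(X,T)$ together with this $(Y,S)$. The key claim is that
$$X\ \longmapsto\ (X,\,X\circ Y)$$
defines an isomorphism of posets from $\cL(\cA,\cK_z)$ onto $P_z$, where $\cK_z = \cK_{Y,S} = \cK\cap\bigcap_{Y_e=0}H_e^{S_e}$ is a nonempty convex open set.

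I would prove the claim in three steps. \emph{(a)~Image and surjectivity.} A face of $\cA$ either lies inside a given open half-space or misses it entirely, so $\cL(\cA,\cK_z)$ is precisely the set of $X\in\cL(\cA,\cK)$ with $F_X\cap\cK\subseteq\cK_z$; for such an $X$, the inclusion $F_X\subseteq\overline{F_{X\circ Y}}$ together with openness of $\cK$ shows $X\circ Y\in\cL(\cA,\cK)$, this covector is zero-free and hence a tope, and it lies above $X$, so $(X,X\circ Y)\in\Sal(\cA,\cK)$; condition~(2) of Lemma~\ref{confusing} then gives $z\in U_{X,X\circ Y}$, i.e.\ $(X,X\circ Y)\in P_z$. \emph{(b)~Inverse.} Conversely, condition~(1) of Lemma~\ref{confusing} shows that any $(X,T)\in P_z$ satisfies $T = X\circ Y$ and $X_e = S_e$ for every $e$ with $Y_e = 0$; hence $F_X\cap\cK\subseteq\cK_z$, so $X\in\cL(\cA,\cK_z)$ and $(X,T)\mapsto X$ is a two-sided inverse. \emph{(c)~Order.} For $X\le X'$ in $\cL(\cA,\cK_z)$, a short case analysis on coordinates (using $X\circ X' = X'$) gives $X'\circ(X\circ Y) = X'\circ Y$, so $(X,X\circ Y)\preceq(X',X'\circ Y)$; the reverse implication is immediate from the definition of $\preceq$. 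Granting the claim, Proposition~\ref{face poset}, applied to the conditional oriented matroid $\cL(\cA,\cK_z)$ (Example~\ref{realizable}), shows $|P_z|\cong|\cL(\cA,\cK_z)|$ is contractible, and Corollary~\ref{Dan's got a lot of nerve} then gives that $\cZ(\cA,\cK)$ and $|\Sal(\cA,\cK)|$ have the same weak homotopy type, which is the proposition.

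The main obstacle is step~(c) together with the ``automatic tope'' bookkeeping in step~(a): one has to see that the Salvetti order on $P_z$ collapses to the componentwise order on first coordinates, and that the conditions ``$X\circ Y$ is a tope'' and ``$(X,X\circ Y)\in\Sal(\cA,\cK)$'' add nothing once $F_X\cap\cK\subseteq\cK_z$. These checks are all made routine by Lemma~\ref{confusing} and Proposition~\ref{face poset}; the only real content of the argument is the recognition that the local poset $P_z$ — a priori merely a subposet of the Salvetti poset — is itself isomorphic to the covector poset of a conditional oriented matroid.
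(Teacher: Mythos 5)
Your proposal is correct and follows essentially the same route as the paper: the order-preserving map $(X,T)\mapsto U_{X,T}$ into $\Op(\cZ(\cA,\cK))$, the identification of the local poset $P_z$ with the covector poset $\cL(\cA,\cK_z)$ via $X\mapsto(X,X\circ Y)$ using Lemma~\ref{confusing}, and then Proposition~\ref{face poset} combined with Corollary~\ref{Dan's got a lot of nerve}. Your steps (a)--(c) merely spell out verifications that the paper leaves implicit (e.g.\ that $X\circ Y$ is automatically a tope and that the inverse map is order-preserving), and they are all sound.
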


\begin{proof}
Consider the map that associates to any element $(X,T)\in\Sal(\cA,\cK)$
the open subset $U_{X,T}\subset\cZ(\cA,\cK)$.  By Lemma \ref{containment}, this is a map of posets. 
For any $z\in \cZ(\cA,\cK)$, consider the poset $$\Sal(\cA,\cK)_{z} = \{(X,T)\in\Sal(\cA,\cK)\mid z\in U_{X,T}\}.$$
Let $Y$ be the unique covector such that $\pi(z)\in F_Y$.
By the equivalence of (2) and (4) in Lemma \ref{confusing}, we have
$$\Sal(\cA,\cK)_{z} = \{(X,X\circ Y)\mid F_X\cap \cK\subset\cK_z\}.$$
In particular, there is a bijection from $\cL(\cA,\cK_z)$ to $\Sal(\cA,\cK)_{z}$ given by the map $X\mapsto (X,X\circ Y)$.
We claim that this bijection is in fact an isomorphism of posets.  Indeed, if
$X\leq X'\in \cL(\cA,\cK_z)$, then $X'\circ X = X'$, which implies that 
$(X,X\circ Y)\preceq(X',X'\circ Y)\in\Sal(\cA,\cK)_{z}$.

Since we have a poset isomorphism $\cL(\cA,\cK_z) \cong \Sal(\cA,\cK)_{z}$, 
Proposition \ref{face poset} implies that the order complex $|\Sal(\cA,\cK)_{z}|$ is contractible.
Then Corollary \ref{Dan's got a lot of nerve} implies that $|\Sal(\cA,\cK)|$ is weakly homotopy equivalent to $\cZ(\cA,\cK)$.
\end{proof}

\begin{proof-z-pair}
Theorem \ref{stronger} and Proposition \ref{whe} together imply that $\cM(\cA,\cK)$ and $|\Sal(\cA,\cK)|$ have the same weak homotopy type.
Since $\cM(\cA,\cK)$ and $|\Sal(\cA,\cK)|$ are both homotopy equivalent to CW-complexes, they must be homotopy equivalent.
This well-known fact that can be proved by combining Whitehead's theorem \cite[Proposition 4.5]{hatcher} with the existence and
uniqueness of CW-approximations \cite[Proposition 4.13 and Corollary 4.19]{hatcher}.
\end{proof-z-pair}

\excise{
From \cite[Proposition 4.13]{hatcher}, there exist CW-complexes $C_1,C_2,C_3$ approximating $|\Sal(\cA,\cK)|$, $\cZ(\cA,\cK)$, and $\cM(\cA,\cK)$, respectively.
Then the composition of the maps $C_1 \to |\Sal(\cA,\cK)|$ and $|\Sal(\cA,\cK)|\to \cZ(\cA,\cK)$ is a weak homotopy equivalence from $C_1$ to $\cZ(\cA,\cK)$.
Similarly, composition the maps $C_3 \to \cM(\cA,\cK)$ and $\cM(\cA,\cK) \to \cZ(\cA,\cK)$ gives a weak homotopy equivalence $C_3 \to \cZ(\cA,\cK)$.
In particular, this gives weak homotopy equivalences between each of $C_1,C_2,C_3$ and $\cZ(\cA,\cK)$.
% \begin{center}
%  \begin{tikzpicture}[]
%   \node[invisivertex] (c1) at (-3,2){$C_1$};
%   \node[invisivertex] (c2) at (0,2){$C_2$};
%   \node[invisivertex] (c3) at (3,2){$C_3$};
%      \node[invisivertex] (sal) at (-3,0){$|\Sal(\cA,\cK)|$};
%      \node[invisivertex] (z) at (0,0){$\cZ(\cA,\cK)$};
%      \node[invisivertex] (m) at (3,0){$M(\cA,\cK)$};
%      \path[->] (sal) edge [] node[above] {$f$} (z);
%      \path[->] (m) edge [] node[above] {$g$} (z);
%      \path[->] (c1) edge [] node[left] {$f'$} (sal);
%      \path[->,dashed,red] (c1) edge [] node[above] {$f\circ f'$} (z);
%      \path[->] (c2) edge [] node[left] {} (z);
%      \path[->] (c3) edge [] node[right] {$g'$} (m);
%      \path[->,dashed,red] (c3) edge [] node[above] {$g\circ g'$} (z);
%   \end{tikzpicture}
% \end{center}
% Since $f,f',g$, and $g'$ are all weak homotopy equivalences, so are the compositions $f\circ f'$ and $g\circ g'$.
From \cite[Proposition 4.19]{hatcher}, there must be weak equivalences $C_1 \to C_2$ and $C_2 \to C_3$.
Now Whitehead's theorem (\cite[Proposition 4.5]{hatcher}) tells us that the weak homotopy equivalence $C_1 \to |\Sal(\cA,\cK)|$ can be replaced with a homotopy equivalence going the other direction.
Composing gives a weak homotopy equivalence between $|\Sal(\cA,\cK)|$ and $\cM(\cA,\cK)$.
Applying Whitehead's theorem (\cite[Proposition 4.5]{hatcher}) again gives the result.
In pictures, we have 

\begin{center}
 \begin{tikzpicture}[]
  \node[invisivertex] (c1) at (-3,2){$C_1$};
  \node[invisivertex] (c2) at (0,2){$C_2$};
  \node[invisivertex] (c3) at (3,2){$C_3$};
     \node[invisivertex] (sal) at (-3,0){$|\Sal(\cA,\cK)|$};
     \node[invisivertex] (z) at (0,0){$\cZ(\cA,\cK)$};
     \node[invisivertex] (m) at (3,0){$\cM(\cA,\cK)$};
     \path[->] (sal) edge [] node[above] {} (z);
     \path[->] (m) edge [] node[above] {} (z);
     \path[<-,red,thick] (c1) edge [] node[left] {} (sal);
     \path[->] (c2) edge [] node[left] {} (z);
     \path[->,red,thick] (c3) edge [] node[right] {} (m);
     \path[->,red,thick] (c1) edge [] node[above] {} (c2);
     \path[->,red,thick] (c2) edge [] node[above] {} (c3);
     \path[->,dashed] (c3) edge [] node[above] {} (z);
     \path[->,dashed] (c1) edge [] node[above] {} (z);
  \end{tikzpicture}
\end{center}
}

\bibliographystyle{amsalpha}
\bibliography{bibliography}

\end{document}